\newtheoremstyle{mystyle}{}{}{\slshape}{2pt}{\scshape}{.}{ }{}
\newtheoremstyle{etapestyle}{}{}{\itshape}{2em}{\sffamily}{:}{ }{\thmname{#1}}
\newtheoremstyle{definitionstyle}{}{}{}{2pt}{\bfseries}{.}{ }{}
\newtheorem{thm}{Theorem}[section]
\newtheorem*{thm3}{Theorem 3}
\newtheorem*{lem2}{Lemma 2}
\newtheorem*{prob}{Problem}
\newtheorem{cor}[thm]{Corollary}
\newtheorem{prop}[thm]{Proposition}
\newtheorem{defi}[thm]{Definition}
\newtheorem{lemme}[thm]{Lemma}
\theoremstyle{mystyle}
 \theoremstyle{remark}
\theoremstyle{etapestyle}
\theoremstyle{definitionstyle}
\newcommand{\op}{\operatorname}
\newcommand{\rw}{\rightarrow}
\newcommand{\si}{\sigma}
\begin{document}

\title{Special reductive groups over an arbitrary field}

\bigskip

\author{Mathieu Huruguen}
\date{}

\maketitle

\bigskip

\begin{abstract}
A linear algebraic group $G$ defined over a field $k$
is called special
if every $G$-torsor over every field extension of $k$ is trivial. In 1958
Grothendieck classified
special groups in the case where the base field is
algebraically closed. In this paper
we describe the derived subgroup and the coradical of a special reductive group over an arbitrary field $k$. We also classify special semisimple 
groups, special reductive groups of inner type and special quasisplit reductive groups over an arbitrary field $k$.
\end{abstract}

\bigskip
\bigskip
\bigskip

\section{Introduction}
Let $k$ be a base field and $G$ an algebraic group defined over $k$.
The group $G$ is called \textbf{special} if every $G$-torsor defined over a field extension of $k$ is trivial. 
In other words, if for every field extension $K$ of $k$ the first fppf-cohomology set $H^1(K,G)$ contains only one element. 
Examples of special linear groups include 
the additive group $\mathbb{G}_a$, the multiplicative group 
$\mathbb{G}_m$, the general linear group $\op{GL}_n$, and more generally the group $\op{GL}_1(A)$, where $A$ is a central simple algebra over $k$, and
the classical groups $\op{SL}_n$ and $\op{Sp}_{2n}$. In contrast, the group $\op{SO}_{n}$ is not special for $n\geqslant3$. The 
special groups over an algebraically closed field were introduced by Serre in \cite{Esp} - recently reprinted in \cite{Esp2}. 
In this paper, Serre gave the basic properties of special groups, for example, he showed that they are linear and connected.
The study of special groups over an algebraically closed field was then completed by Grothendieck in \cite{Groth}. In the reductive case, his result can be stated as 
follows : 
\begin{thm}[Grothendieck, $1958$]\label{algclos}
Suppose that $G$ is reductive and $k$ is algebraically closed. Then $G$ is special if and only if its derived subgroup is isomorphic to a direct product 
$$G_1\times G_2\times\cdots\times G_r$$
where, for each $i$, the group $G_i$ is isomorphic to $\op{SL}_{n_i}$ or $\op{Sp}_{2n_i}$ for some integer $n_i$.
\end{thm}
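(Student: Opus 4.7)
The theorem splits naturally into two directions. The sufficiency (if $D(G)$ has the stated form then $G$ is special) is formal: it follows from the classical facts that $\mathrm{SL}_n$ and $\mathrm{Sp}_{2n}$ are special, combined with standard closure properties of the class of special groups. The necessity (if $G$ is special then $D(G)$ has this form) is the substantial content of the theorem and reduces to the classification of simple simply connected special groups over an algebraically closed field.

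\textbf{Plan for sufficiency.} The two closure properties I would invoke, both due to Serre, are: (i) a product of special groups is special, and (ii) given an exact sequence $1\to H\to G\to Q\to 1$ with $H$ and $Q$ special, $G$ is special --- the proof being a direct chase in the long exact sequence of non-abelian cohomology. That $\mathrm{SL}_n$ is special is a restatement of Hilbert~90 applied to $1\to\mathrm{SL}_n\to\mathrm{GL}_n\to\mathbb{G}_m\to 1$, while speciality of $\mathrm{Sp}_{2n}$ is exactly the classification of non-degenerate alternating forms of rank $2n$ up to isometry. Property (i) then yields that the hypothesized product $D(G)$ is special. Since $k$ is algebraically closed, the coradical $G/D(G)$ is a split torus, hence special by Hilbert~90 applied componentwise. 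Applying (ii) to $1\to D(G)\to G\to G/D(G)\to 1$ concludes that $G$ is special.

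\textbf{Plan for necessity.} Assume $G$ is special. I would proceed in three stages. First, use the exact sequence $1\to D(G)\to G\to G/D(G)\to 1$ together with the vanishing of $H^1$ of split tori over $k$ and a diagram chase in non-abelian cohomology to reduce to proving the statement for the semisimple derived subgroup $D(G)$. Second, consider the simply connected cover $\tilde G\to D(G)$ with finite central kernel $\mu$; by exhibiting, over a suitably constructed extension $K/k$ (an iterated Laurent series field is convenient), a non-trivial class in the image of the coboundary $H^1(K, D(G))\to H^2(K, \mu)$, one forces $\mu$ to be trivial, so that $D(G)$ is already simply connected. Third, decompose $D(G)$ over $k$ as a product of simple simply connected groups, observe by property (i) above that $G$ special implies each factor is special, and reduce to the classification of simple simply connected special groups.

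\textbf{Main obstacle.} The hardest step is the case-by-case verification that a simple simply connected group $H$ over the algebraically closed field $k$ of type other than $A_{n-1}$ or $C_n$ fails to be special. For each excluded type one must exhibit an extension $K/k$ and a non-trivial class in $H^1(K, H)$, and detect its non-triviality by a cohomological invariant. For the orthogonal types $B_n$ and $D_n$, one detects non-triviality via classical quadratic-form invariants such as the discriminant, the Hasse--Witt class, and the spinor norm, applied to explicit quadratic forms over rational function fields. For the exceptional types $G_2, F_4, E_6, E_7, E_8$, one uses the Rost invariant $H^1(K, H)\to H^3(K,\mathbb{Q}/\mathbb{Z}(2))$ evaluated on a generic (versal) $H$-torsor defined over the function field of a classifying space. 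Once each simple factor is pinned down as $\mathrm{SL}$ or $\mathrm{Sp}$, the theorem follows.
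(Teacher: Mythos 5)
You should know at the outset that the paper contains no proof of this statement: Theorem \ref{algclos} is quoted as Grothendieck's 1958 classification (cited as \cite{Groth}) and is used as an input later (for instance at the start of the proof of Proposition \ref{derived}), so your proposal can only be measured against the literature, not against an in-paper argument. With that said, your sufficiency half is complete and correct: specialness of $\mathrm{SL}_n$ and $\mathrm{Sp}_{2n}$, closure under products, and the extension principle applied to $1\to G'\to G\to G/G'\to 1$ with split coradical do the job.

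The necessity half has one step whose stated justification would not work as written. In your Stage 1, exactness of $G(K)\to C(K)\to H^1(K,G')\to H^1(K,G)=1$ (with $C=G/G'$) shows that $H^1(K,G')$ is exactly the image of the connecting map from $C(K)$; the vanishing of $H^1(K,C)$, which you invoke, is irrelevant to this direction (it was only needed for sufficiency). What you actually need is surjectivity of $G(K)\to C(K)$ for every extension $K$, and this is a genuine structural input: it holds because $C$ is the quotient of a split maximal torus $T$ by the subgroup $T\cap G'$, which is a subtorus (equivalently, $X(C)$ is a direct summand of $X(T)$, e.g.\ because $X(T)/X(G)$ embeds into the torsion-free weight lattice $\op{Pic}(G/B)$), so Hilbert 90 applied to that kernel torus, not to $C$, gives the surjectivity. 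That this cannot be a purely formal chase is shown by the paper's own general-field results: $\op{GL}_1(A)$ is special while its derived subgroup $\op{SL}_1(A)$ is not when $A$ is nonsplit, so "special $\Rightarrow$ derived subgroup special" genuinely uses the splitness available over $\bar{k}$. Two smaller points: in Stage 3 you invoke your property (i) in the wrong direction — you need the (equally easy, but different) fact that a direct factor of a special group is special; and Stage 2 together with the case-by-case elimination is where all the substance lies, yet it is only named: exhibiting a class of $H^1(K,G')$ with nontrivial coboundary in $H^2(K,\mu)$ over an iterated Laurent series field requires an explicit construction (cyclic-algebra/loop-torsor type), and the exclusion of types $B$, $D$ and the exceptional types via quadratic-form invariants and the Rost invariant is viable but each case still needs the corresponding anisotropic form or versal-torsor argument. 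Note also that this invariant-theoretic route differs from Grothendieck's original one, which proceeds uniformly through Zariski-local triviality and the torsion primes of $G$ rather than type by type.
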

The result of Grothendieck naturally raises the problem of classifying special reductive groups over an arbitrary field $k$.
The present paper is an attempt to solve this problem. Our most general classification result is the following :
\begin{thm}\label{classif}
Let $G$ be a reductive algebraic group over $k$. Then $G$ is special if and only if the three following conditions hold :
\begin{enumerate}[(1)]
 \item The derived subgroup of $G$ is isomorphic to  
$$R_{K_1|k}(G_1)\times R_{K_2|k}(G_2)\times\cdots\times R_{K_r|k}(G_r)$$
where, for each index $i$, the extension $K_i$ of $k$ is finite and separable, $R_{K_i|k}$ denotes the Weil scalar restriction 
functor - see for example \cite[Lemma $20.6$]{Inv} -  and the group $G_i$ is isomorphic over $K_i$ to either $\op{SL}_1(A_i)$, where 
$A_i$ is a central simple algebra over $K_i$, or $\op{Sp}_{2n_i}$ for some integer $n_i$.
 \item The coradical of $G$ is a special torus.
 \item For every field extension $K$ of $k$, we have 
 $$\op{Im}(\alpha_{G',K})+\op{Ker}(H^1(K,Z_{G'})\rw H^1(K,Z_{G}))=H^1(K,Z_{G'}).$$
where $Z_G$ is the center of $G$, $Z_{G'}$ is the center of its derived subgroup $G'$, and the map $\alpha_{G',K}$ is defined in \ref{alphabeta}. 
 \end{enumerate}
\end{thm}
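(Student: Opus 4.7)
The plan is to prove the two implications separately, using Grothendieck's Theorem \ref{algclos} after base change to $\bar k$ to pin down the structure of $G'$, and analyzing a diagram of cohomology sequences attached to $1\to G'\to G\to C\to 1$ and to the two central extensions $1\to Z_{G'}\to G'\to G'/Z_{G'}\to 1$ and $1\to Z_G\to G\to G/Z_G\to 1$ (whose common quotient is the adjoint group) to extract and use conditions (2) and (3).

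For the direction "$G$ special $\Rightarrow$ (1), (2), (3)", I would first base-change to $\bar k$ and apply Theorem \ref{algclos}, obtaining a decomposition of $G'_{\bar k}$ into a direct product of copies of $\mathrm{SL}_n$ and $\mathrm{Sp}_{2n}$. Grouping the $\bar k$-simple factors according to the orbits of $\mathrm{Gal}(\bar k/k)$, Galois descent for semisimple groups identifies each orbit with a Weil restriction $R_{K_i|k}(G_i)$ for the separable extension $K_i/k$ cut out by the stabilizer of one factor; the factor $G_i$ is either $\mathrm{Sp}_{2n_i}$ (the only form, as $\mathrm{Sp}_{2n}$ has no outer automorphisms) or a form of $\mathrm{SL}_n$, and outer twists of $\mathrm{SL}_n$ must be excluded using the specialness hypothesis (they produce unitary-type factors whose non-trivial $H^1$ survives in $G$), forcing $G_i\simeq \mathrm{SL}_1(A_i)$ for some central simple $K_i$-algebra $A_i$. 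This yields (1). For (2), the long exact cohomology sequence of $1\to G'\to G\to C\to 1$ together with a twisting argument applied at each $\xi\in H^1(K,C)$ shows $H^1(K,C)$ is exhausted by the image of the trivial $H^1(K,G)$, hence $C$ is a special torus. For (3), chasing $H^1$-classes in the commutative diagram relating the two central extensions above and using that $H^1(K,G)=*$, the resulting triangle decomposition of $H^1(K,Z_{G'})$ reads off exactly as the required identity.

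For the reverse direction, condition (1) together with Shapiro's lemma writes $H^1(K,G')$ as a product of cohomologies of $\mathrm{SL}_1(B)$'s and of $\mathrm{Sp}_{2n}$'s over various finite extensions of $K$, all of which are abelian: the symplectic factors are trivial, and the $\mathrm{SL}_1$ factors are reduced-norm quotients of the form $L^\times/\mathrm{Nrd}(B^\times)$. Moreover each factor is simply connected, and for both $\mathrm{SL}_1(B)$ and $\mathrm{Sp}_{2n}$ the connecting map from the $H^1$ of the center onto the $H^1$ of the group is surjective (using $K^{\times n}\subseteq \mathrm{Nrd}(B^\times)$ in the $\mathrm{SL}_1$ case), so after restriction of scalars $H^1(K,Z_{G'})\to H^1(K,G')$ is surjective and the whole problem reduces to abelian cohomology of $Z_{G'}$. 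Condition (3) then says exactly that every class in $H^1(K,Z_{G'})$ splits, after transport to $H^1(K,G')\to H^1(K,G)$, as a coboundary image dying in $H^1(K,G')$ plus a piece killed already in $H^1(K,Z_G)$ and hence in $H^1(K,G)$; combined with (2), which yields $H^1(K,C)=\ast$, the exact sequence $H^1(K,G')\to H^1(K,G)\to H^1(K,C)$ (handling non-abelian fibers by twisting) closes the argument and gives $H^1(K,G)=\ast$.

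The hard part is condition (3): it is a non-obvious algebraic identity between subgroups of $H^1(K,Z_{G'})$ that encodes precisely how specialness of $G$ amounts to non-trivial coradical and central obstructions of $G'$-torsors being cancelled inside $G$. Both forcing it from specialness and reconstructing triviality of $H^1(K,G)$ from (1)--(3) will require careful diagram chasing across the three exact sequences above together with systematic use of twisting to pass between fibers of non-abelian $H^1$. A secondary difficulty is the descent step in the necessity of (1): one must use the specialness hypothesis on the whole of $G$ to rule out outer forms of $\mathrm{SL}_n$ among the derived factors, which is not merely a statement about $G'$ in isolation.
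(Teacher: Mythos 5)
Your sufficiency argument is essentially sound, and in fact slightly more direct than the paper's route through Proposition \ref{classif1}: from (2) and the exactness of $H^1(K,G')\rw H^1(K,G)\rw H^1(K,C_G)$ at the middle term, every class of $H^1(K,G)$ comes from $H^1(K,G')$; from (1) and Shapiro, $H^1(K,Z_{G'})\rw H^1(K,G')$ is onto; and (3), pushed into the abelian group $H^1(K,Z_G)$ via $\alpha_{G,K}=\iota_*\circ\alpha_{G',K}$, kills the image in $H^1(K,G)$. Likewise your derivation of (3) from specialness (surjectivity of $\alpha_{G,K}$ plus the commutative triangle over $G_{\op{ad}}(K)=(G')_{\op{ad}}(K)$) is the paper's argument. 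The problems are in the necessity of (1) and (2).

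For (1), your two exclusion mechanisms fail. The absence of outer automorphisms of $\op{Sp}_{2n}$ does not make the split group the only form: it makes every form inner, i.e.\ of the shape $\op{Sp}(A,\si)$ with $A$ a degree-$2n$ algebra with symplectic involution, and ruling out nonsplit $A$ is a substantial part of the paper's proof (Lemma \ref{outerA}, using \cite[Theorem $3.1$]{Lot} in the division case and non-conjugate symplectic involutions built from the anisotropic hermitian forms of Lemma \ref{lemme} otherwise). For outer type A, "their non-trivial $H^1$ survives in $G$" is not a valid mechanism: $H^1(K,G_i)\neq 1$ does not imply $H^1(K,G)\neq 1$ when $G$ has a larger centre (compare $\op{SL}_1(A)\subset\op{GL}_1(A)$ — indeed this absorption phenomenon is exactly why condition (3) exists). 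The correct device, which your sketch lacks, is Proposition \ref{plusdinfos}: specialness of $G$ forces $\beta_{G',K}$, hence each $\beta_{G_i,K}$, to have trivial kernel — a condition insensitive to enlarging $G$ by central tori because $(G')_{\op{ad}}=G_{\op{ad}}$ — and one then exhibits two kernel classes for outer type A or nonsplit type C via explicit anisotropic hermitian forms over $k(t_1,\ldots,t_{n-1})$. For (2), the pointed-set sequence of $1\rw G'\rw G\rw C_G\rw 1$ gives no surjectivity of $H^1(K,G)\rw H^1(K,C_G)$ (there is no next term, $G'$ being non-abelian and non-central), and you cannot twist at a class $\xi\in H^1(K,C_G)$ without first lifting it to $G$, which is exactly what is in question; so your one-line proof of "the coradical is special" is circular. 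The paper's actual proof (Proposition \ref{coradical}) is the hard point: it identifies $C_G=Z_G/Z_{G'}$, uses Proposition \ref{plusdinfos}(3) to embed $H^1(K,C_G)$ into $H^2(K,Z_{G'})$ with image dying in $H^2(K,Z_G)$, and then needs property $(P)$ for groups as in (1) — established in Lemma \ref{stableby} via the Schofield–Van den Bergh index reduction formula and stability under products and Weil restriction — to realize such a class, after a field extension, in the image of $\beta_{G',L}$, producing a nontrivial element of $\op{Ker}\beta_{G,L}$ and contradicting Proposition \ref{immediate}. None of this input appears in your proposal.
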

Condition {\it (1)} above is explicit, as well as condition {\it (2)}, by the classification of special tori due to Colliot-Thélène and recalled at the end of the present paper. 
In contrast, condition {\it (3)} is not very explicit in general. 
However, under some additional assumptions on the group $G$, namely that $G$ is semisimple, an inner form of a Chevalley group, or quasisplit, 
we are able to make condition {\it (3)} completely explicit,
providing the classification in these cases. We hope that an explicit version of condition {\it (3)} will emerge in the future, unifying these cases and providing the classification 
of special reductive groups.

\medskip

The paper is organized as follows. In Section \ref{preliminary} we gather some facts to be used in the following sections. In Section \ref{derivedcoradical}
we determine which algebraic groups can arise as derived subgroups of a special group and which can arise as coradicals,
respectively in Proposition \ref{derived} and \ref{coradical}. In Section \ref{classification} we prove our general classification result stated above and then derive 
from it the classification of special semisimple groups, special reductive groups of inner type and special quasisplit groups in Proposition \ref{semisimple}, 
\ref{inner} and \ref{qsplit} respectively. Finally, we recall in Section \ref{tori} the classification of special tori due to Colliot-Thélène. 

\medskip

To finish this introduction, we say a word about special non-reductive groups. First, by \cite[Lemma $1.13$]{Sansuc}, if an algebraic group $G$ over a field $k$
possesses a $k$-split unipotent normal subgroup $U$, then $G$ is special if and only if $G/U$ is special. For example, if the field $k$ is perfect, then 
$G$ is special if and only if its quotient by the unipotent radical - which is a reductive group - is special, as every unipotent group over $k$ is $k$-split. 
On a different note, Nguyen classifies 
special 
unipotent groups over ``reasonable fields'' in \cite{Tan}. It is a direct consequence of the fact that the additive group $\mathbb{G}_a$ is special that every 
$k$-split unipotent group is special. In \cite{Tan}, Nguyen proves conversely that a special unipotent group is $k$-split for certain fields $k$, for example 
when $k$ is finitely generated over a perfect field.

\section*{Acknowledgment}
I would like to warmly thank Zinovy Reichstein for pointing out this problem to me, for very interesting discussions on this topic and also for remarks which helped 
improving 
the exposition of this paper. I would also like to thank Roland Lötscher for bringing reference \cite{Tan} to my attention.


\section{Preliminary results}\label{preliminary}
Let $k$ be a base field and $G$ a reductive algebraic group defined over $k$. Throughout the paper we denote by 
 $Z_G$ the {\bf center} of $G$, by $G_{\op{ad}}$ the {\bf adjoint quotient} 
$G/Z_G$ of $G$, by $G'$ the {\bf derived subgroup} of $G$, by $R_G$ the {\bf radical} of $G$ and by $C_G$ its {\bf coradical}.
We have an exact sequence of algebraic groups : 

$$
\begin{tikzpicture}
\node (A) at (0,0) {$1$};
\node (B) at (1.5,0) {$Z_G$};
\node (C) at (3.4,0) {$G$};
\node (D) at (5.5,0) {$G_{\op{ad}}$};
\node (E) at (8,0) {$1 \quad \quad \quad  (*)$};
\draw[->,>=latex] (A) to (B);
\draw[->,>=latex] (B) to (C);
\draw[->,>=latex] (C) to (D);
\draw[->,>=latex] (D) to (E);
\end{tikzpicture}
$$

\begin{defi}\label{alphabeta}
Let $K$ be a field extension of $k$. We denote by 
$$\begin{tikzpicture}
\node (A) at (0,0) {$\alpha_{G,K} : G_{\op{ad}}(K)$};
\node (B) at (5.5,0) {$H^1(K,Z_G)\text{ and } \beta_{G,K} : H^1(K,G_{\op{ad}})$};
\node (C) at (10.7,0) {$H^2(K,Z_G)$};
\draw[->,>=latex] (A) to (B);
\draw[->,>=latex] (B) to (C);
\end{tikzpicture}$$
the connecting maps in fppf-cohomology obtained from the exact sequence $(*)$ above. 
\end{defi}

\begin{prop}\label{immediate}
The group $G$ is special if and only if for every field extension $K$ of $k$, the map $\alpha_{G,K}$ is surjective and the map $\beta_{G,K}$
has trivial kernel. 
\end{prop}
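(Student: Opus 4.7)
The plan is to derive the proposition directly from the long exact sequence in fppf cohomology associated to the central extension $(*)$. Because $Z_G$ is an abelian (in fact central) subgroup scheme of $G$, the short exact sequence
$$1 \to Z_G \to G \to G_{\op{ad}} \to 1$$
induces, for each field extension $K/k$, an exact sequence of pointed sets
$$G(K) \to G_{\op{ad}}(K) \xrightarrow{\alpha_{G,K}} H^1(K,Z_G) \to H^1(K,G) \to H^1(K,G_{\op{ad}}) \xrightarrow{\beta_{G,K}} H^2(K,Z_G),$$
where the existence of the connecting map $\beta_{G,K}$ uses precisely that $Z_G$ is central and abelian. The claim of the proposition is then extracted from this sequence by a standard diagram chase at $H^1(K,G)$; the main task is just to organize the implications correctly.

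For the forward direction, I would assume $H^1(K,G)$ is trivial for every $K/k$. Exactness at $H^1(K,Z_G)$ says the image of $\alpha_{G,K}$ equals the kernel of the map $H^1(K,Z_G) \to H^1(K,G)$; since the target is a point, this kernel is all of $H^1(K,Z_G)$, so $\alpha_{G,K}$ is surjective. Exactness at $H^1(K,G_{\op{ad}})$ says the kernel of $\beta_{G,K}$ equals the image of $H^1(K,G) \to H^1(K,G_{\op{ad}})$; since the source is a point, this image is trivial, so $\ker \beta_{G,K}$ is trivial.

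For the converse, I would suppose that $\alpha_{G,K}$ is surjective and that $\beta_{G,K}$ has trivial kernel, and then show $H^1(K,G)$ is a point. Surjectivity of $\alpha_{G,K}$ forces, by exactness at $H^1(K,Z_G)$, the map $H^1(K,Z_G) \to H^1(K,G)$ to have trivial image; by exactness at $H^1(K,G)$ this means $H^1(K,G) \to H^1(K,G_{\op{ad}})$ has trivial kernel, i.e., is injective as a map of pointed sets. On the other hand, exactness at $H^1(K,G_{\op{ad}})$ identifies the image of $H^1(K,G) \to H^1(K,G_{\op{ad}})$ with $\ker \beta_{G,K}$, which is trivial by hypothesis. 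Combining injectivity with trivial image yields $H^1(K,G) = \ast$, and since $K/k$ was arbitrary, $G$ is special.

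The only delicate point is that the second half of the long exact sequence involves non-abelian $H^1$'s of possibly non-abelian groups $G$ and $G_{\op{ad}}$, so one must be careful to invoke only the exactness of pointed sets; happily, the statement one needs (injectivity of a map of pointed sets follows from triviality of the kernel) is exactly what that weaker form of exactness provides. There is no real obstacle here: the proposition is essentially a dictionary translating triviality of $H^1(K,G)$ into separate statements about the two connecting maps flanking it.
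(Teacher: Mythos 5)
Your proof is correct and is essentially identical to the paper's: the paper writes down the same five-term exact sequence of pointed sets coming from $(*)$ and extracts the equivalence by exactly this chase at $H^1(K,Z_G)$, $H^1(K,G)$ and $H^1(K,G_{\op{ad}})$. The only caveat is your closing remark that triviality of the kernel implies injectivity for maps of pointed sets --- this is false in general in non-abelian cohomology --- but it is harmless here, since your argument really only uses that the map $H^1(K,G)\rightarrow H^1(K,G_{\op{ad}})$ has both trivial kernel and trivial image, which already forces $H^1(K,G)$ to be reduced to a point.
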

\begin{proof}
 Part of the exact sequence of pointed sets obtained from the exact sequence $(*)$ reads :
 $$ \begin{tikzpicture}

\node (A) at (-0.1,0) {$G_{\op{ad}}(K)$};
\node (B) at (3,0) {$H^1(K,Z_G)$};
\node (C) at (6,0) {$H^1(K,G)$};
\node (D) at (9,0) {$H^1(K,G_{\op{ad}})$};
\node (E) at (12.6,0) {$H^2(K,Z_G)$};

\node (F) at (1.3,0.2) {$\alpha_{G,K}$};
\node (G) at (10.8,0.25) {$\beta_{G,K}$};

\draw[->,>=latex] (A) to (B);
\draw[->,>=latex] (B) to (C);
\draw[->,>=latex] (C) to (D);
\draw[->,>=latex] (D) to (E);

\end{tikzpicture}$$
It is a straightforward consequence of the exactness of this sequence of pointed sets that $H^1(K,G)$ is trivial if and only if 
$\alpha_{G,K}$ is surjective and $\beta_{G,K}$ has trivial kernel.  
\end{proof}
  
\begin{prop}\label{plusdinfos}
Let $K$ be a field extension of $k$. If $G$ is special then the following properties hold :
\begin{enumerate}[(1)]
 \item the map 
$$ \begin{tikzpicture}
\node (A) at (0,0) {$\beta_{G',K} : H^1(K,(G')_{\op{ad}})$};
\node (B) at (4,0) {$H^2(K,Z_{G'})$};

\draw[->,>=latex] (A) to (B);
\end{tikzpicture}$$ has trivial kernel.
 \item the image of the map $\beta_{G',K}$ intersects the kernel of the morphism 
 $$ \begin{tikzpicture}
\node (A) at (0,0) {$H^2(K,Z_{G'})$};
\node (B) at (3,0) {$H^2(K,Z_G)$};

\draw[->,>=latex] (A) to (B);
\end{tikzpicture}$$
trivially.
 \item the following exact sequence of diagonalizable groups 
 $$ \begin{tikzpicture}

\node (A) at (0,0) {$1$};
\node (B) at (1.5,0) {$Z_{G'}$};
\node (C) at (3.4,0) {$Z_G$};
\node (D) at (5.5,0) {$Z_G/Z_{G'}$};
\node (E) at (8.3,0) {$1 \quad \quad \quad (**)$};

\draw[->,>=latex] (A) to (B);
\draw[->,>=latex] (B) to (C);
\draw[->,>=latex] (C) to (D);
\draw[->,>=latex] (D) to (E);

\end{tikzpicture}$$
induces the following exact sequence in fppf-cohomology : 
$$ \begin{tikzpicture}

\node (B) at (3,0) {$0$};
\node (C) at (5.5,0) {$H^1(K,Z_G/Z_{G'})$};
\node (D) at (9,0) {$H^2(K,Z_{G'})$};
\node (E) at (12.2,0) {$H^2(K,Z_G)$};


\draw[->,>=latex] (B) to (C);
\draw[->,>=latex] (C) to (D);
\draw[->,>=latex] (D) to (E);

\end{tikzpicture}$$
\end{enumerate}
\end{prop}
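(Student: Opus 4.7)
The plan is to exploit the commutative diagram of central extensions whose top row is
$1 \to Z_{G'} \to G' \to G_{\op{ad}} \to 1$
and whose bottom row is the sequence $(*)$, namely $1 \to Z_G \to G \to G_{\op{ad}} \to 1$, with vertical arrows given by the inclusion $\iota : Z_{G'} \hookrightarrow Z_G$, the inclusion $G' \hookrightarrow G$, and the identity on $G_{\op{ad}}$. Here I am using the standard identification $(G')_{\op{ad}} = G'/Z_{G'} \cong G/Z_G = G_{\op{ad}}$, valid for any connected reductive group because $G = G'\cdot Z_G$ and $Z_{G'} = G' \cap Z_G$. Applying fppf cohomology and invoking functoriality of the connecting maps yields the two naturality identities
\[ \iota_* \circ \alpha_{G',K} = \alpha_{G,K} \quad \text{and} \quad \iota_* \circ \beta_{G',K} = \beta_{G,K}, \]
which will drive all three assertions.

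Assertions (1) and (2) are then one-line diagram chases using the $\beta$-identity. For (1): if $\xi \in \ker \beta_{G',K}$, then $\beta_{G,K}(\xi) = \iota_* \beta_{G',K}(\xi) = 0$, and since $G$ is special, Proposition \ref{immediate} tells us $\beta_{G,K}$ has trivial kernel, so $\xi$ is trivial. For (2): if $\eta = \beta_{G',K}(\xi)$ lies in $\ker \iota_*$, then $\beta_{G,K}(\xi) = \iota_*(\eta) = 0$, again forcing $\xi$ and hence $\eta$ to be trivial.

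For (3) the strategy is to reduce the claim to showing that $\iota_* \colon H^1(K, Z_{G'}) \to H^1(K, Z_G)$ is surjective, after which the four-term exact sequence falls straight out of the long exact cohomology sequence attached to $(**)$. The surjectivity is obtained from the $\alpha$-identity: since $G$ is special, $\alpha_{G,K}$ is surjective by Proposition \ref{immediate}, and the identity $\alpha_{G,K} = \iota_* \circ \alpha_{G',K}$ immediately forces $\iota_*$ to be surjective on $H^1$. Once this is established, exactness at $H^2(K, Z_{G'})$ is part of the ordinary long exact sequence and automatic, while exactness at $H^1(K, Z_G/Z_{G'})$ amounts to the vanishing of the map $H^1(K, Z_G) \to H^1(K, Z_G/Z_{G'})$, which by exactness of the long sequence is equivalent to the surjectivity of $\iota_*$ we just proved.

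The only non-formal ingredient here is the structural identification $(G')_{\op{ad}} \cong G_{\op{ad}}$ compatibly with the inclusions $Z_{G'} \subset Z_G$ and $G' \subset G$, which is standard once one recalls the equality $Z_{G'} = G' \cap Z_G$ in any connected reductive group. After that, each of (1), (2), and (3) collapses to a single diagram chase in the long exact sequences of pointed sets, so I do not expect a genuine obstacle.
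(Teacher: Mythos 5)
Your proposal is correct and follows essentially the same route as the paper: the commutative diagram of central extensions built on the identification $(G')_{\op{ad}}=G_{\op{ad}}$, the resulting naturality of $\alpha$ and $\beta$ combined with Proposition \ref{immediate}, and the long exact fppf-cohomology sequence attached to $(**)$. The diagram chases you describe for (1), (2) and (3) are exactly those in the paper's proof.
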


\begin{proof}
We will obtain these properties from Proposition \ref{immediate} and the fact that the adjoint groups $(G')_{\op{ad}}$ and $G_{\op{ad}}$ are equal.  
The inclusion of $G'$ in $G$ gives rise to a natural 
commutative diagram :
$$ \begin{tikzpicture}
\node (F) at (0,1.5) {$1$};
\node (G) at (1.5,1.5) {$Z_{G'}$};
\node (H) at (3.4,1.5) {$G'$};
\node (I) at (5.5,1.5) {$(G')_{\op{ad}}$};
\node (J) at (7.3,1.5) {$1$};

\draw[->,>=latex] (F) to (G);
\draw[->,>=latex] (G) to (H);
\draw[->,>=latex] (H) to (I);
\draw[->,>=latex] (I) to (J);

\node (A) at (0,0) {$1$};
\node (B) at (1.5,0) {$Z_G$};
\node (C) at (3.4,0) {$G$};
\node (D) at (5.5,0) {$G_{\op{ad}}$};
\node (E) at (7.3,0) {$1$};

\draw[->,>=latex] (A) to (B);
\draw[->,>=latex] (B) to (C);
\draw[->,>=latex] (C) to (D);
\draw[->,>=latex] (D) to (E);

\draw[->,>=latex] (G) to (B);
\draw[->,>=latex] (H) to (C);

\node (F) at (5.5,0.75) {$||$};

\end{tikzpicture}$$
where both rows are exact. 
The diagram above leads to a commutative diagram of connecting maps in fppf-cohomology :
$$ \begin{tikzpicture}
\node (A) at (0,1) {$H^1(K,G_{\op{ad}})$};
\node (B) at (4,2) {$H^2(K,Z_{G'})$};
\node (C) at (4,0) {$H^2(K,Z_G)$};

\node (E) at (2,1.85) {$\beta_{G',K}$};
\node (F) at (2,0.25) {$\beta_{G,K}$};
\draw[->,>=latex] (A) to (B);
\draw[->,>=latex] (A) to (C);
\draw[->,>=latex] (B) to (C);

\end{tikzpicture}$$
 where the vertical map is induced by the inclusion of $Z_{G'}$ in $Z_G$. As $G$ is special, by Proposition \ref{immediate}, the map $\beta_{G,K}$ has trivial kernel, which readily 
 implies {\it(1)} and {\it(2)}.
 
 \medskip
 
 To prove {\it (3)} we look at the following diagram in fppf-cohomology:
  $$ \begin{tikzpicture}
\node (A) at (0,1) {$G_{\op{ad}}(K)$};
\node (B) at (4,2) {$H^1(K,Z_{G'})$};
\node (C) at (4,0) {$H^1(K,Z_G)$};

\node (E) at (1.8,0.3) {$\alpha_{G,K}$};
\node (F) at (1.8,1.7) {$\alpha_{G',K}$};
\draw[->,>=latex] (A) to (B);
\draw[->,>=latex] (A) to (C);
\draw[->,>=latex] (B) to (C);

\end{tikzpicture}$$
where the vertical map is induced by the inclusion of $Z_{G'}$ in $Z_G$. 
As $G$ is special, by Proposition \ref{immediate} we see that $\alpha_{G,K}$ is surjective, forcing the vertical map to be surjective as well.
Now, part of the long exact sequence in fppf-cohomology obtained from the short exact sequence $(**)$ reads : 
$$ \begin{tikzpicture}

\node (A) at (0,0) {$H^1(K,Z_{G'})$};
\node (B) at (3,0) {$H^1(K,Z_G)$};
\node (C) at (6.3,0) {$H^1(K,Z_G/Z_{G'})$};
\node (D) at (9.6,0) {$H^2(K,Z_{G'})$};
\node (E) at (12.5,0) {$H^2(K,Z_G)$};


\draw[->,>=latex] (A) to (B);
\draw[->,>=latex] (B) to (C);
\draw[->,>=latex] (C) to (D);
\draw[->,>=latex] (D) to (E);

\end{tikzpicture}$$
and the result readily follows.  
\end{proof}

\begin{prop}\label{classif1}
Suppose that the coradical $C_G$ of $G$ is special. Then $G$ is special if and only if, for every field extension $K$ of $k$, the following two conditions hold :
\begin{enumerate}[(1)]
  \item $\op{Im}(\alpha_{G',K})+\op{Ker}(H^1(K,Z_{G'})\rw H^1(K,Z_{G}))=H^1(K,Z_{G'})$
   
 \item the map 
 $$ \begin{tikzpicture}
\node (A) at (0,0) {$\beta_{G',K} : H^1(K,(G')_{\op{ad}})$};
\node (B) at (4,0) {$H^2(K,Z_{G'})$};

\draw[->,>=latex] (A) to (B);
\end{tikzpicture}$$
has trivial kernel.

\end{enumerate}
\end{prop}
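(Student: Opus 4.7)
The plan is to deduce both directions from Proposition \ref{immediate} by exploiting the identification $Z_G/Z_{G'} \cong C_G$ and the long exact sequence in fppf-cohomology associated to the short exact sequence $(**)$ of Proposition \ref{plusdinfos}, which in our setting reads $1 \to Z_{G'} \to Z_G \to C_G \to 1$. Indeed, since $G$ is connected and reductive one has $G = Z_G \cdot G'$, so the composition $Z_G \hookrightarrow G \twoheadrightarrow C_G$ is surjective with kernel $Z_G \cap G' = Z_{G'}$. The hypothesis that $C_G$ is special then gives $H^1(K,C_G) = 0$ for every field extension $K$ of $k$, and the long exact sequence forces both the surjectivity of $H^1(K,Z_{G'}) \to H^1(K,Z_G)$ and the injectivity of $H^2(K,Z_{G'}) \to H^2(K,Z_G)$.

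For the forward direction, suppose $G$ is special. Condition (2) is exactly Proposition \ref{plusdinfos}(1). For condition (1), fix $x \in H^1(K,Z_{G'})$ and let $y$ be its image in $H^1(K,Z_G)$. By Proposition \ref{immediate}, $\alpha_{G,K}$ is surjective, so I choose $g \in G_{\op{ad}}(K)$ with $\alpha_{G,K}(g) = y$. The commutative $\alpha$-triangle of Proposition \ref{plusdinfos} (which uses $(G')_{\op{ad}} = G_{\op{ad}}$) shows that $\alpha_{G',K}(g)$ also has image $y$ in $H^1(K,Z_G)$, so $x - \alpha_{G',K}(g)$ lies in $\op{Ker}(H^1(K,Z_{G'}) \to H^1(K,Z_G))$, yielding the required decomposition. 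The subtraction is legitimate because $Z_{G'}$ is a diagonalizable, hence commutative, group scheme.

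For the backward direction, assume conditions (1) and (2); the goal is to verify the two hypotheses of Proposition \ref{immediate}. To show $\alpha_{G,K}$ is surjective, start from $y \in H^1(K,Z_G)$ and use the surjectivity observed in the first paragraph to produce a lift $x \in H^1(K,Z_{G'})$; by (1) write $x = \alpha_{G',K}(g) + \xi$ with $\xi$ in the kernel of the vertical map, then pushing forward to $H^1(K,Z_G)$ gives $y = \alpha_{G,K}(g)$. To show $\op{Ker}\,\beta_{G,K}$ is trivial, suppose $\xi \in H^1(K,G_{\op{ad}})$ satisfies $\beta_{G,K}(\xi) = *$. The commutative $\beta$-triangle of Proposition \ref{plusdinfos} sends $\beta_{G',K}(\xi)$ to the trivial class in $H^2(K,Z_G)$, so the injectivity observed in the first paragraph forces $\beta_{G',K}(\xi)$ to be trivial, and condition (2) then yields $\xi$ trivial.

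The main obstacle is conceptual rather than technical: one must recognize that the single hypothesis ``$C_G$ is special'' delivers \emph{two} independent cohomological payoffs, a surjectivity at $H^1$ feeding the $\alpha$-argument and an injectivity at $H^2$ feeding the $\beta$-argument, each handling one of the two conditions in Proposition \ref{immediate}. Everything else is a diagram chase on the triangles supplied by the inclusions $Z_{G'} \subset Z_G$ and $G' \subset G$ together with the equality $(G')_{\op{ad}} = G_{\op{ad}}$.
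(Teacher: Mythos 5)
Your proof is correct and follows essentially the same route as the paper: the specialness of $C_G=Z_G/Z_{G'}$ yields surjectivity of $H^1(K,Z_{G'})\rw H^1(K,Z_G)$ and injectivity of $H^2(K,Z_{G'})\rw H^2(K,Z_G)$, and then the two commutative triangles comparing $\alpha_{G',K},\alpha_{G,K}$ and $\beta_{G',K},\beta_{G,K}$ reduce everything to Proposition \ref{immediate}. Your explicit element chases (and the citation of Proposition \ref{plusdinfos}(1) for condition (2)) are just an unpacked version of the paper's two equivalences, so there is nothing substantively different to report.
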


\begin{proof}
 The coradical $C_G$ is equal to $Z_G/Z_{G'}$. As it is special, we know that the morphism 
$$ \begin{tikzpicture}
\node (A) at (0,0) {$H^1(K,Z_{G'})$};
\node (B) at (3,0) {$H^1(K,Z_{G})$};

\draw[->,>=latex] (A) to (B);
\end{tikzpicture}$$
is surjective. Therefore, by the commutative diagram : 
  $$ \begin{tikzpicture}
\node (A) at (0,1) {$G_{\op{ad}}(K)$};
\node (B) at (4,2) {$H^1(K,Z_{G'})$};
\node (C) at (4,0) {$H^1(K,Z_G)$};

\node (E) at (1.8,0.3) {$\alpha_{G,K}$};
\node (F) at (1.8,1.7) {$\alpha_{G',K}$};
\draw[->,>=latex] (A) to (B);
\draw[->,>=latex] (A) to (C);
\draw[->,>=latex] (B) to (C);

\end{tikzpicture}$$
we see that {\it(1)} is equivalent to $\alpha_{G,K}$ being surjective.
Similarly, because the coradical $C_G$ is special, we know that the morphism 
$$ \begin{tikzpicture}
\node (A) at (0,0) {$H^2(K,Z_{G'})$};
\node (B) at (3,0) {$H^2(K,Z_{G})$};

\draw[->,>=latex] (A) to (B);
\end{tikzpicture}$$
induced by the inclusion is injective. Therefore, by the commutative diagram :
$$ \begin{tikzpicture}
\node (A) at (0,1) {$H^1(K,G_{\op{ad}})$};
\node (B) at (4,2) {$H^2(K,Z_{G'})$};
\node (C) at (4,0) {$H^2(K,Z_G)$};

\node (E) at (2,1.85) {$\beta_{G',K}$};
\node (F) at (2,0.25) {$\beta_{G,K}$};
\draw[->,>=latex] (A) to (B);
\draw[->,>=latex] (A) to (C);
\draw[->,>=latex] (B) to (C);

\end{tikzpicture}$$
we see that {\it(2)} is equivalent to the fact that $\beta_{G,K}$ has a trivial kernel. We can conclude that $G$ is special by Proposition \ref{immediate}.
 
\end{proof}

\section{The derived subgroup and the coradical of a special reductive group}\label{derivedcoradical}
In this section we will determine which algebraic groups can arise as derived subgroups of a special reductive group and which can arise as coradicals
respectively in Proposition \ref{derived} and \ref{coradical}
below. 

\subsection{A lemma on hermitian forms}
In order to lighten the proof of Proposition \ref{derived}, we start by proving Lemma \ref{lemme} below about hermitian forms. 
We refer the reader to
\cite[§$4$]{Inv} for the definition of hermitian forms on a right module over an algebra $D$ 
equipped with an involution.

\medskip

Let $D$ be a division algebra, $k$ a subfield of its center and $\tau$ an involution of $D$. Let $n$ be an integer and
$t_1,\ldots,t_n$ be algebraically independent variables over $k$. We denote by $K$ be the field of fractions $k(t_1,\ldots,t_n)$. 
We fix an integer $m$, a collection of scalars $\alpha_1,\ldots,\alpha_n$ in $k^*$  
and, for every index $i$ between $1$ and $m$, we fix an element $a_i=(a_{i,1},\ldots,a_{i,n})$ of $\mathbb{Z}^n$.
\begin{lemme}\label{lemme}
Suppose that the images of the $a_i$s in $(\mathbb{Z}/2\mathbb{Z})^n$ are all different.
Then, the hermitian form :
$$h(x,y)=\sum_{i=1}^m \alpha_i t_1^{a_{i,1}}\ldots t_n^{a_{i,n}} \tau(x_i)y_i$$
is anistropic on $(D\otimes_k K)^m$.
 
\end{lemme}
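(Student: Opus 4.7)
The main idea is a lexicographic leading-monomial argument. In the one-variable case $K=k(t)$ one could directly exploit the $t$-adic valuation: for $0\neq x\in D\otimes_k K$ one has $v_t(\tau(x)x)=2v_t(x)$, so the valuations $v_t(\alpha_i t^{a_i}\tau(x_i)x_i)=a_i+2v_t(x_i)$ lie in distinct classes modulo $2$ by hypothesis, hence are distinct, and the summand of minimal valuation cannot cancel with the others. I would mimic this in $n$ variables by replacing the $t$-adic valuation with the lexicographic order on $\mathbb{Z}_{\geq 0}^n$.

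First I would reduce to the case $a_i\in\mathbb{Z}_{\geq 0}^n$ by multiplying $h$ by a common monomial $t_1^N\cdots t_n^N$ for $N$ large; this is a unit of $K$ fixed by the extension of $\tau$ to $D\otimes_k K$ that acts as the identity on $K$, so it does not affect anisotropy. Supposing toward a contradiction that $h(x,x)=0$ for some $x=(x_1,\ldots,x_m)\neq 0$, I would clear denominators by a common factor $f\in k[t_1,\ldots,t_n]$ to reduce to the case $x_i\in D\otimes_k k[t_1,\ldots,t_n]$ for every $i$.

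Equip $\mathbb{Z}_{\geq 0}^n$ with the lexicographic order, a total order compatible with addition. Writing $x_i=\sum_\beta c_{i,\beta}\,t^\beta$ with $c_{i,\beta}\in D$, for every index $i$ with $x_i\neq 0$ let $\beta_i$ be the lex-smallest element of the support of $x_i$. In the expansion $\tau(x_i)x_i=\sum_{\beta,\gamma}\tau(c_{i,\beta})c_{i,\gamma}\,t^{\beta+\gamma}$, compatibility of lex with addition forces the lex-smallest exponent to be $2\beta_i$, with coefficient $\tau(c_{i,\beta_i})c_{i,\beta_i}$, which is nonzero because $D$ is a division algebra. Consequently the lex-smallest exponent of $\alpha_i t^{a_i}\tau(x_i)x_i$ is $a_i+2\beta_i$, with coefficient $\alpha_i\tau(c_{i,\beta_i})c_{i,\beta_i}\neq 0$.

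Finally I would choose $i^\ast$ realizing the lex-minimum of $\{a_i+2\beta_i:x_i\neq 0\}$. The hypothesis that the $a_i$ have pairwise distinct images in $(\mathbb{Z}/2\mathbb{Z})^n$ implies that so do the $a_i+2\beta_i$, so $i^\ast$ is unique with strict inequality against all other indices. If $a_{i^\ast}+2\beta_{i^\ast}$ occurred as an exponent in some other summand $\alpha_j t^{a_j}\tau(x_j)x_j$, say equal to $a_j+\beta+\gamma$ with $\beta,\gamma$ in the support of $x_j$, then $\beta+\gamma\geq 2\beta_j$ in lex order would give $a_{i^\ast}+2\beta_{i^\ast}\geq a_j+2\beta_j$, contradicting the choice of $i^\ast$. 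Therefore the coefficient of $t^{a_{i^\ast}+2\beta_{i^\ast}}$ in $h(x,x)$ equals $\alpha_{i^\ast}\tau(c_{i^\ast,\beta_{i^\ast}})c_{i^\ast,\beta_{i^\ast}}$, which is nonzero, contradicting $h(x,x)=0$. The conceptual obstacle is isolating the right replacement for the $t$-adic valuation in several variables; lex order works precisely because it is total, addition-compatible, and the reduction mod $2$ of $a_i+2\beta_i$ coincides with that of $a_i$, so the parity hypothesis on the $a_i$ plugs in directly.
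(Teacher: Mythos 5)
Your proof is correct and follows essentially the same route as the paper: clear denominators to work in $D\otimes_k k[t_1,\ldots,t_n]$, use the lexicographic order (the paper takes leading monomials, you take lex-smallest ones, which is an inessential reversal), and observe that the extremal exponents $a_i+2\beta_i$ are pairwise distinct because they are distinct modulo $2$, so the extremal term, whose coefficient is nonzero since $D$ is a division algebra, cannot cancel. You simply spell out the details that the paper leaves implicit.
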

\begin{proof}
The proof goes along the same line as \cite[p.$111$]{Pfist}. Suppose that there exists an isotropic vector $x$. By clearing the denominator we can further assume that 
all the coordinates $x_i$ of $x$ belong to $D\otimes_k k[t_1,\ldots,t_n]$. Now, as a consequence of our assumption, we see that the leading monomials 
of the Laurent polynomials $$\alpha_i t_1^{a_{i,1}}\ldots t_n^{a_{i,n}} \tau(x_i)x_i$$ with respect to the lexicographic order are all different when $i$ ranges from 
$1$ to $m$. Therefore they cannot cancel. 
\end{proof}

\subsection{The derived subgroup of a special reductive group}

We will use \cite[§$26$]{Inv} as a basic reference
for the classification of algebraic groups over non-algebraically
closed fields. We will adopt the notations of \cite{Inv} throughout.
\begin{prop}\label{derived}
Let $G$ be a special reductive algebraic group over $k$. The derived subgroup of $G$ is isomorphic to  
$$R_{K_1|k}(G_1)\times R_{K_2|k}(G_2)\times\cdots\times R_{K_r|k}(G_r)$$
where, for each $i$, the extension $K_i$ of $k$ is finite and separable and the group $G_i$ is isomorphic over $K_i$ to either $\op{SL}_1(A_i)$, where 
$A_i$ is a central simple algebra over $K_i$, or $\op{Sp}_{2n_i}$ for some integer $n_i$.
\end{prop}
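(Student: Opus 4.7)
The plan combines three ingredients: Grothendieck's Theorem \ref{algclos} over $\bar k$ to constrain the possible types, the structure theorem for simply connected semisimple groups to decompose $G'$ as a product of Weil restrictions, and Lemma \ref{lemme} together with Proposition \ref{plusdinfos}(1) to exclude the "bad" forms.

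First I would apply Theorem \ref{algclos} over the algebraic closure. Since $G'_{\bar k}$ is then a product of copies of $\op{SL}_n$ and $\op{Sp}_{2n}$, which are simply connected, $G'$ is itself simply connected semisimple over $k$. The standard classification of simply connected semisimple groups (\cite[Theorem $26.8$]{Inv}) gives a decomposition
$$G' \cong R_{K_1|k}(G_1) \times \cdots \times R_{K_r|k}(G_r),$$
with each $G_i$ absolutely simple simply connected over a finite separable extension $K_i/k$. Applying Theorem \ref{algclos} over $\bar K_i$ forces each $G_i$ to be of type $A_{n_i-1}$ or $C_{n_i}$. By the classification of such groups (\cite[\S $26$]{Inv}), $G_i$ must be one of: (a) $\op{SL}_1(A)$ with $A$ central simple over $K_i$; (b) $\op{SU}(B,\sigma)$ with $B$ central simple over a quadratic separable extension $L/K_i$ and $\sigma$ unitary; (c) $\op{SU}(A,\tau)$ with $A$ central simple of degree $2n_i$ over $K_i$ and $\tau$ symplectic (recovering $\op{Sp}_{2n_i}$ exactly when $A$ is split). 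It remains to rule out case (b) and to force $A$ to be split in case (c).

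For this I would argue by contradiction using Proposition \ref{plusdinfos}(1): $\beta_{G',K}$ has trivial kernel for every field extension $K/k$. Suppose some $G_i$ is of forbidden type. I pass to the underlying division algebra $D$ (both $(G_i)_{\op{ad}}$ and the Brauer class are Morita-invariant), set $K = k(t_1, \ldots, t_n)$ with $n$ chosen so that $2^n$ exceeds the matrix size attached to $G_i$, and apply Lemma \ref{lemme} over $K_i(t_1, \ldots, t_n) = K_i \otimes_k K$ to produce an anisotropic hermitian form $h$ on a free module over $D \otimes_k K$. The adjoint involution of $h$ differs from the original one on the matrix algebra and hence defines a non-trivial twist of $(G_i)_{\op{ad}}$; via Shapiro's lemma this yields a class $\xi \in H^1(K, (G')_{\op{ad}})$. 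Morita invariance of the Brauer class forces the image of $\xi$ in $H^2(K, Z_{G'})$ to be trivial, so $\xi$ is a non-trivial element of $\op{Ker}(\beta_{G', K})$, contradicting Proposition \ref{plusdinfos}(1). This excludes both (b) and non-split (c), leaving only the asserted forms.

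The main obstacle is this last cohomological step: one must verify simultaneously that $\xi$ is non-trivial in $H^1$ and that its image in $H^2$ vanishes. In case (c) the center of $G_i$ is $\mu_2$, the map $\beta$ records the difference of Brauer classes in $\op{Br}(K)[2]$, and anisotropy of $h$ from Lemma \ref{lemme} yields a non-isometric hermitian form with the same underlying Brauer class, producing the kernel element directly. Case (b) is structurally parallel but technically heavier: the center of $G_i$ is the norm-one torus $R^{(1)}_{L/K_i}(\mu_{n_i})$, $\beta$ lands in a corestriction-kernel subgroup of $H^2$, and one applies Lemma \ref{lemme} with a unitary involution on $D$. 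The lemma is engineered precisely to supply the required anisotropic forms in both situations.
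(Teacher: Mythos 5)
The first half of your argument (Theorem \ref{algclos} over $\bar k$, the decomposition via \cite[Theorem 26.8]{Inv}, and the reduction to showing that each absolutely simple factor $G_i$ is of inner type A or split of type C, using the triviality of kernels of connecting maps $\beta$) is essentially the paper's, up to the minor point that the paper base-changes $G$ to $K_i$ and applies Proposition \ref{plusdinfos} there, rather than invoking Shapiro's lemma for the adjoint group of a Weil restriction. The genuine gap is in the exclusion step. You propose to produce a single anisotropic hermitian form $h'$ via Lemma \ref{lemme}, observe that its adjoint involution ``differs from the original one,'' and conclude that this gives a nontrivial class $\xi\in H^1(K,(G_i)_{\op{ad}})$ lying in $\op{Ker}(\beta)$. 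But triviality of such a class means conjugacy of the two involutions, which corresponds to \emph{similarity} of the hermitian forms (isometry up to a factor in $K^*$), and nothing in your argument rules out that the original involution is itself adjoint to a form similar to $h'$ over $K=k(t_1,\ldots,t_n)$ --- it may perfectly well be anisotropic there too; ``non-isometric'' is in any case not the right invariant, since conjugation only sees similarity classes. The missing idea, which is how the paper argues (Lemma \ref{outerA}), is to exhibit \emph{two} elements of the kernel rather than one element nontrivial relative to the base point: one writes down both an isotropic form and an anisotropic form (with the same discriminant, in the unitary case), and uses that isotropy is preserved by isometry, by the scaling action of $H^1(K,Z_{G_i})$, and by conjugation; this shows $\op{Ker}(\beta_{G_i,K})$ has at least two elements without ever comparing with the given involution.

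Two further places where the proposal, as written, would fail. In type C your construction needs the module over the underlying division algebra $D$ to have rank at least $2$, since otherwise no nonsingular isotropic form exists; when $A_i$ is itself a division algebra (e.g.\ biquaternion) the isotropy comparison is unavailable, and the paper must quote a separate result, \cite[Theorem 3.1]{Lot}, to get two non-conjugate symplectic involutions --- your sketch does not address this case at all. In the outer type A case, the concrete description of $H^1(K,G_i)$ as isometry classes of hermitian forms of fixed discriminant, and of the action of $H^1(K,\mu_{n_i[L]})$ as scaling, is only available after reducing to the situation where the algebra is split over the quadratic extension $L$; the paper achieves this by a genuine extra step, passing to the function field of the Weil restriction of the Severi--Brauer variety, which is absent from your outline. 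Finally, ``Morita invariance of the Brauer class'' does not by itself show that your class dies in $H^2(K,Z_{G_i})$ when $Z_{G_i}$ is the norm-one group $\mu_{n_i[L]}$, whose $H^2$ is not a Brauer group; the clean substitute is to observe that any class coming from $H^1(K,G_i)$ automatically lies in $\op{Ker}(\beta_{G_i,K})$ --- which again forces you back to producing two non-equivalent classes inside $H^1(K,G_i)$, as the paper does.
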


\begin{proof}
By Theorem \ref{algclos}, the group $G'_{\bar{k}}$, where $\bar{k}$ is an algebraic closure of $k$, is a semisimple simply connected group whose simple components are of type A and C. Therefore, by \cite[Theorem $26.8$]{Inv}, 
the group $G'$ is isomorphic to a direct product
$$R_{K_1|k}(G_1)\times R_{K_2|k_2}(G_2)\times\cdots\times R_{K_r|k}(G_r)$$
where, for each index $i$, the extension $K_i$ of $k$ is finite and separable and the group $G_i$ is an absolutely simple simply connected group over $K_i$ of type A or C.
For each index $i$, $G_i$ is a direct factor of the derived subgroup of the special reductive group $G_{K_i}$. By Proposition \ref{plusdinfos} we get that the map
$\beta_{G_{K_i},K}$ has trivial kernel for every field extension $K$ of $K_i$, which readily implies that the map
$\beta_{G_i,K}$ has trivial kernel as well. This forces $G_i$ to be of inner type A or split of type C, by Lemma \ref{outerA} below, completing
the proof of the proposition.

\begin{lemme}\label{outerA}
Let $G$ be an absolutely simple simply connected group of type A or C over the field $k$. If, for every field extension $K$ of $k$, the map $\beta_{G,K}$ has a trivial kernel, then 
$G$ is either of inner type A or split of type C.
\end{lemme}

{\it Proof of Lemma \ref{outerA}}. Suppose first that $G$ is of outer type A. We will prove that the kernel of $\beta_{G,K}$ contains at least two elements for 
some field extension 
$K$ of $k$. Observe that to prove this property we can replace $G$ by $G_M$ for some scalar extension $M$ of $k$.
By \cite[§$26$]{Inv}, $G$ is isomorphic to $\op{SU}(A,\si)$, where $A$ is a central simple 
algebra of degree $n$ - at least $3$, otherwise $\op{SU}(A,\si)$ is of inner type - over a quadratic separable extension $L$ of $k$ equipped with an involution $\si$ of 
the second kind.

\medskip

We will now reduce to the case where $A$ is split over $L$. 
To this aim, we denote by $Y$ the Severi-Brauer variety of $A$, by $X$ the Weil scalar restriction of $Y$ from $L$ to $k$, and by $K$ be the function field of $X$. 
As $X$ is geometrically integral, the field $k$ is algebraically closed in $K$, and consequently $K\otimes_k L$ is a field. Moreover the set $Y(K\otimes_k L)$ is not 
empty, as it is equal to $X(K)$. This implies that the field extension $K\otimes_k L$ of $L$ is a splitting field for $A$. 
Now, we observe that the group $G_K$ is isomorphic to $\op{SU}(K\otimes_k A,\si_K)$, where $K\otimes_k A$ is a split central simple algebra over $K\otimes_k L$
equipped with an 
involution $\si_K$ of the second kind. It is thus of outer type A and satisfies moreover the property that for every field extension $M$ of $K$, 
the map $\beta_{G_K,M}$ has trivial kernel.
 Therefore, by replacing $k$ by $K$ and $G$ by $G_K$, we are reduced to the case where the central simple algebra $A$ is split
over $L$.

\medskip

Then $A$ is isomorphic to $\op{End}_L(L^n)$ for some integer $n$ greater or equal to three, and the involution $\si$ is adjoint to 
a nonsingular hermitian form $h$ on $L^n$, by \cite[§$26$]{Inv}. The group $G$ is therefore isomorphic to $\op{SU}_L(n,h)$. 
Its center is the group 
$\mu_{n[L]}$, the kernel of the norm map :
$$ \begin{tikzpicture}
\node (A) at (0,0) {$N_{L|k} : R_{L|k}(\mu_{n,L})$};
\node (B) at (3,0) {$\mu_{n,k}$};

\draw[->,>=latex] (A) to (B);
\end{tikzpicture}$$
Let $K$ be the field $k(t_1,\ldots,t_{n-1})$ where the $t_i$s are algebraically independent variables over $k$.

\medskip

We claim that the  
kernel of $\beta_{G,K}$ contains at least two elements. We have an exact sequence of pointed sets :
$$ \begin{tikzpicture}
\node (B) at (3,0) {$H^1(K,\mu_{n[L]})$};
\node (C) at (6.1,0) {$H^1(K,G)$};
\node (D) at (9.2,0) {$H^1(K,G_{\op{ad}})$};
\node (E) at (12.8,0) {$H^2(K,\mu_{n[L]})$};
\node (G) at (11,0.25) {$\beta_{G,K}$};

\draw[->,>=latex] (B) to (C);
\draw[->,>=latex] (C) to (D);
\draw[->,>=latex] (D) to (E);

\end{tikzpicture}$$
in the fppf-cohomology. As $\mu_{n[L]}$ is abelian and central in $G$, there is a natural action of $H^1(K,\mu_{n[L]})$ on $H^1(K,G)$, and the set of 
orbits for this action is precisely the kernel of $\beta_{G,K}$.
By \cite[Example $29.19$]{Inv}, the set $H^1(K,G)$ is in natural correspondence with 
the set of isometry classes of nonsingular hermitian forms on the vector space 
$(K\otimes_k L)^n$ 
with the same discriminant $\alpha$ as $h$. Moreover, by \cite[Proposition $30.13$]{Inv}, the group $H^1(K,\mu_{n[L]})$ is the quotient of 
$$\{(x,y)\in K^*\times (K\otimes_k L)^*,\quad x^n=N_{K\otimes_k L|K}(y)\}$$
by the subgroup 
$$\{(N_{K\otimes_k L|K}(z),z^n),\quad z\in (K\otimes_k L)^*\}.$$
Strictly speaking, the description above is given in \cite[Proposition $30.13$]{Inv} only when $n$ is not divisible by the characteristic of the base field $k$. 
This comes from the fact that the cohomology considered there is the Galois cohomology. The same proof leads to the description in the fppf-cohomology, with no restriction 
on the integer $n$.
It is then easy to prove that the action of the class $[(x,y)]$ on the isometry class $[h']$ of the hermitian form $h'$ is given as follows :  
$$[(x,y)]\cdot [h'] =[x h'].$$
We will now prove that the set $H^1(K,G)$ contains the isometry class of an isotropic form and an anisotropic form. As these two classes cannot be in the 
same orbit under the action of $H^1(K,\mu_{n[L]})$, this proves the claim above.

\medskip

First, as $n$ is greater than $2$, $H^1(K,G)$ contains the isometry class  
of an isotropic hermitian form, namely the one with matrix 
$$\op{diag}(\left[
\begin{array}{cc}
0 & 1  \\
1 & 0 
\end{array}
\right], 1, \cdots,1,-\alpha).$$
Moreover, by Lemma \ref{lemme} above, the hermitian form :
$$h'(x_1,\ldots ,x_n)=t_1 \si(x_1)x_1+\cdots +t_{n-1} \si(x_{n-1})x_{n-1}+\alpha t_1\ldots t_{n-1} \si(x_n)x_n$$
which has discriminant $\alpha$, is anistropic over the field $K\otimes_k L$. 

\medskip

 Suppose now that $G$ is of type C and not split. Again here, we want to see that the kernel of $\beta_{G,K}$ contains at least two elements for some 
field extension $K$ of $k$.
By \cite[§$26$]{Inv}, $G$ is isomorphic to $\op{Sp}(A,\si)$, where $A$ is a nonsplit central simple 
algebra of degree $2n$ - at least $4$, otherwise $\op{Sp}(A,\si)$ is of type A - over $k$ equipped with an involution $\si$ of 
symplectic type. The center of $G$ is isomorphic to $\mu_2$. Let $K$ be a field extension of $k$. By \cite[$(29.22)$]{Inv} the kernel of $\beta_{G,K}$ 
is in bijection 
with the conjugacy classes of involutions of symplectic type on $A_{K}$. 

\medskip
If $A$ is a division algebra, then by \cite[Theorem $3.1$]{Lot}, there are more than one conjugacy classes of involutions of symplectic type on $A_{K}$.
From now on, we suppose that $A$ is not a division algebra. Let $D$ be the division algebra Brauer equivalent 
to $A$. It is not $k$, as $A$ is nonsplit. Therefore, $D$ carries an involution $\tau$ of symplectic type, by
\cite[Theorem $3.1$]{Inv} and \cite[Corollary $2.8$]{Inv}, and, by Wedderburn's theorem \cite[Theorem $(1.1)$]{Inv}, 
$A$ is isomorphic to $M_n(D)$ for some integer $n$ greater or equal to $2$.

\medskip

Let $K$ be the field of fractions $k(t_1,\cdots,t_n)$ on $n$ indeterminates. 
The algebra $D_K$ is a division algebra, and is therefore the division algebra 
Brauer equivalent to $A_K$. Let $M$ be a simple right $A_K$-module, isomorphic to $D_K^n$ - thought of as column vectors. We will make use of the correspondence between
involutions of symplectic type on $A_K$ and hermitian forms on $M$, as explained in \cite[Theorem $(4.2)$]{Inv}. We refer the reader 
to \cite[§$4$]{Inv} for the notion of singular hermitian form and alternating hermitian form in characteristic $2$.  
We define two hermitian forms $h$ and $h'$ on $M$ in the following way :
$$h(x,y)=-\tau(x_1)y_1+\sum_{i=2}^{s}\tau(x_i)y_i$$
and 
$$h'(x,y)=\sum_{i=1}^{s}t_i\tau(x_i)y_i$$ 
These forms are easily seen to be nonsingular.
Furthermore, if the characteristic of $k$ is $2$, then $h$ and $h'$ are alternating. Indeed, 
by \cite[Proposition $2.6$]{Inv}, as $\tau$ is an involution of symplectic type on $D_K$ we know that $K$ is contained in 
$\op{Symd}(D_K,\tau)$, which is enough to prove that for every $x$ in $M$, the elements $h(x,x)$ and $h'(x,x)$ both belong to $\op{Symd}(D_K,\tau)$. 

\medskip

By \cite[Theorem $(4.2)$]{Inv}, the hermitian forms $h$ and $h'$ give rise to two involutions $\tau_h$ and $\tau_{h'}$ on $A_K$ which are both of symplectic type.
If these two involutions were conjugate, then it would exist an element $u$ of $\op{GL}_n(D_K)$ such that 
the hermitian forms $h'$ and the hermitian form : 
$$ \begin{tikzpicture}
\node (A) at (0,0) {$M\times M$};
\node (B) at (4,0) {$D\quad (x,y)\mapsto h(u(x),u(y))$};

\draw[->,>=latex] (A) to (B);
\end{tikzpicture}$$
are proportional by a factor in $K^*$. But $h$ is isotropic, for instance, $(1,1,0,\cdots,0)$ is an isotropic element, and $h$ is not by Lemma \ref{lemme}. 
This provides a contradiction, proving that the involutions $\tau_h$ and $\tau_{h'}$ are not conjugate.

\end{proof}
Observe that every group admitting a direct factor decomposition as in Proposition \ref{derived} occurs as the derived subgroup of a special reductive group.
Indeed, a semi-simple group 
$$R_{K_1|k}(G_1)\times R_{K_2|k}(G_2)\times\cdots\times R_{K_r|k}(G_r)$$
where, for each $i$, the extension $K_i$ of $k$ is finite and separable and the group $G_i$ is isomorphic over $K_i$ to either $\op{SL}_1(A_i)$, where 
$A_i$ is a central simple algebra over $K_i$, or $\op{Sp}_{2n_i}$ for some integer $n_i$, is the derived subgroup of the special reductive group 
$$R_{K_1|k}(H_1)\times R_{K_2|k}(H_2)\times\cdots\times R_{K_r|k}(H_r)$$
where, for each index $i$, $H_i$ is equal to $\op{GL}_1(A_i)$ if $G_i$ is isomorphic to $\op{SL}_1(A_i)$, and $H_i$ is equal to $G_i$ otherwise.

\subsection{The coradical of a special reductive group}

We prove now that the coradical of a special reductive group is a special torus. The classification of special tori, due to Colliot-Thélène, will be 
recalled in Section \ref{tori} below.
\begin{prop}\label{coradical}
Let $G$ be a special reductive algebraic group defined over $k$. 
The coradical $C_G$ of $G$ is a special torus. 
 
\end{prop}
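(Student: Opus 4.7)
The strategy is to show that $H^1(K, C_G) = 0$ for every field extension $K/k$, which, combined with the automatic fact that $C_G$ is a torus when $G$ is reductive, is the definition of $C_G$ being a special torus. The crucial input, already observed in the proof of Proposition \ref{classif1}, is that the coradical is canonically identified with $Z_G/Z_{G'}$, so $C_G$ fits in the short exact sequence $(**)$ of diagonalizable groups appearing in Proposition \ref{plusdinfos}.

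That sequence yields the long exact cohomology sequence
\[
H^1(K, Z_{G'}) \to H^1(K, Z_G) \to H^1(K, C_G) \to H^2(K, Z_{G'}) \to H^2(K, Z_G),
\]
whose flanking arrows around $H^1(K, C_G)$ are the one induced by the inclusion $Z_{G'} \hookrightarrow Z_G$ and the usual connecting map. The vanishing of $H^1(K, C_G)$ therefore reduces to showing that the first map is surjective and the second is injective. Injectivity of the connecting map $H^1(K, C_G) \to H^2(K, Z_{G'})$ is precisely assertion (3) of Proposition \ref{plusdinfos}, which is available because $G$ is special. For surjectivity of the first map, I would reuse the commutative triangle
\[
G_{\op{ad}}(K) \xrightarrow{\alpha_{G',K}} H^1(K, Z_{G'}) \to H^1(K, Z_G)
\]
whose composition equals $\alpha_{G,K}$ (this is the diagram already drawn in the proof of Proposition \ref{classif1}). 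Since $G$ is special, Proposition \ref{immediate} tells us $\alpha_{G,K}$ is surjective, and this forces the second arrow to be surjective as well.

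Combining these two ingredients with the four-term exact sequence above immediately yields $H^1(K, C_G) = 0$ for every $K/k$, whence $C_G$ is a special torus. There is no real obstacle in the argument: all the preparatory work has been absorbed into Propositions \ref{immediate} and \ref{plusdinfos}, and what remains is a short diagram chase.
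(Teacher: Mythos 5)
There is a genuine gap, and it sits exactly at the point you dismiss as ``a short diagram chase.'' From the exact sequence
\[
H^1(K,Z_{G'})\rw H^1(K,Z_G)\rw H^1(K,C_G)\xrightarrow{\ \delta\ } H^2(K,Z_{G'})\rw H^2(K,Z_G),
\]
surjectivity of $H^1(K,Z_{G'})\rw H^1(K,Z_G)$ forces the map $H^1(K,Z_G)\rw H^1(K,C_G)$ to be zero, hence (by exactness) the connecting map $\delta$ to be injective --- this is exactly assertion \emph{(3)} of Proposition \ref{plusdinfos}, and your two ``ingredients'' are in fact one and the same fact. But injectivity of $\delta$ does not give $H^1(K,C_G)=0$: it only identifies $H^1(K,C_G)$ with $\op{Ker}\bigl(H^2(K,Z_{G'})\rw H^2(K,Z_G)\bigr)$, and nothing you have said shows this kernel is trivial. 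To conclude you would need $\delta$ to be the zero map, equivalently $H^2(K,Z_{G'})\rw H^2(K,Z_G)$ to be injective, and that statement is essentially equivalent to what is being proved; assuming it is circular. A sanity check: if your reduction were valid, Proposition \ref{coradical} would be an immediate corollary of Proposition \ref{plusdinfos}, and the paper would not need any further input.

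The missing idea is precisely the content of the paper's proof. Given a nontrivial $x\in H^1(K,C_G)$, Proposition \ref{plusdinfos}\emph{(3)} produces a nontrivial class in $H^2(K,Z_{G'})$ that dies in $H^2(K,Z_G)$; the real work is to turn this into a contradiction with speciality. The paper does so by invoking the structure of $G'$ from Proposition \ref{derived} (a product of Weil restrictions of $\op{SL}_1(A)$ and $\op{Sp}_{2n}$) and introducing property $(P)$: for such groups, any nontrivial class in $H^2(K,Z_{G'})$ remains nontrivial over a suitable extension $L$ of $K$ on which it lies in the image of $\beta_{G',L}$. Establishing $(P)$ requires the Schofield--Van den Bergh index reduction formula (to keep the Brauer class nonzero while reducing its index into the representable range), plus stability under products and Weil restriction (Lemma \ref{stableby}). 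Once $x_L=\beta_{G',L}(y)$ with $x_L\neq 0$, the element $y$ is a nontrivial element of $\op{Ker}(\beta_{G,L})$, contradicting Proposition \ref{immediate}. None of this is absorbed into Propositions \ref{immediate} and \ref{plusdinfos}; your argument stops exactly where the proof begins.
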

\begin{proof}
We say that a reductive algebraic group $G$ defined over a field - which is not necessarily $k$ -  satisfies property $(P)$ if, for every field extension $K$ of 
the field of definition of $G$
and every non trivial element $x$ in $H^2(K,Z_G)$, there exists a field extension $L$ of $K$ such that $x_L$ is not trivial in $H^2(L,Z_G)$ and belongs to the
image of $\beta_{G,L}$.
We will prove as a consequence of Lemma \ref{stableby} below that the derived subgroup $G'$ of $G$ - and more generally any group admitting a direct factor decomposition as in Proposition 
\ref{derived} - satisfies 
property $(P)$. 

\medskip

Before proving this fact let us show how it implies the proposition. 
The coradical $C_G$ of $G$ is equal to the quotient $Z_G/Z_{G'}$. Suppose that $C_G$ is not special. There exists a field extension $K$ of $k$ and a nontrivial 
element $x$ in $H^1(K,C_G)$. By Proposition \ref{plusdinfos}, the image of $x$ in $H^2(K,Z_{G'})$ - still denoted $x$ - is nonzero, and is mapped to zero 
in $H^2(K,Z_{G})$. 
As the group $G'$ satisfies property $(P)$, we can even assume, after possibly extending scalars, that there exists $y$ in $H^1(K,(G')_{\op{ad}})$ such that 
$x=\beta_{G',K}(y)$. We get that $y$ is not trivial and is in the kernel of $\beta_{G,K}$, a contradiction to Proposition \ref{immediate}.

\medskip

Now, the fact that any group admitting a direct factor decomposition as in Proposition 
\ref{derived} satisfies 
property $(P)$ is a direct consequence of Lemma \ref{stableby} below. In this lemma, we say that that a set of reductive algebraic groups 
- not necessarily defined over the same base field - is stable under Weil scalar restriction if for every field $k$, for every finite separable field extension 
$K$ of $k$ and for every reductive algebraic group $G$ defined over $K$ in the set the group $R_{K|k}(G)$ belongs to the set as well.

\begin{lemme}\label{stableby}
The set of reductive algebraic groups satisfying property $(P)$ :
\begin{enumerate}[(1)]
 \item contains $\op{SL}_1(A)$ for every central simple algebra $A$ over a field $k$.
\item contains $\op{Sp}_{2n}$, for every integer $n$ and every field $k$.
 \item is stable under finite direct products.
 \item is stable under Weil scalar restrictions. 
 \end{enumerate}
 
\end{lemme}
{\it Proof of Lemma \ref{stableby}}. Let $k$ be a field and $A$ a central simple algebra over $k$. The center of $\op{SL}_1(A)$ is $\mu_n$, where $n$ is the degree of $A$. 
Let $K$ be a field extension of $k$ and $x$ a nontrivial element of $H^2(K,\mu_n)$. The element $x$ is the Brauer class 
of a central simple algebra $B$ over $K$ of period $d$ dividing $n$, $d$ being greater than $1$. By the Schofield-Van den Bergh index reduction formula 
\cite[Theorem 2.5]{SchoVan}, 
there exists a field extension $L$ of $K$ 
such that 
$B_L$ is a central simple algebra over $L$ of index $d$. This proves that the class $x_L$ is not trivial in $H^2(L,\mu_n)$ because $d$ is not $1$, and belongs to 
the image of $\beta_{\op{SL}_1(A),L}$, this image being precisely the classes of index dividing $n$. We have proved {\it(1)}.

\medskip

The proof of {\it(2)} is similar. Let $k$ be a field and $n$ an integer. The center of $\op{Sp}_{2n}$ is $\mu_2$. 
Let $K$ be a field extension of $k$ and $x$ a nontrivial element of $H^2(K,\mu_2)$. 
The element $x$ is the Brauer class 
of a central simple algebra $B$ over $K$ of period $2$. Applying the index reduction formula once again, there exists a field extension $L$ of $K$ 
such that 
$B_L$ is a central simple algebra over $L$ of index $2$. This proves that the class $x_L$ is not trivial in $H^2(L,\mu_2)$, and belongs to 
the image of $\beta_{\op{Sp}_{2n},L}$, this image being precisely the classes of index dividing $2n$.

\medskip

Let $k$ be a field. We will now prove that if $G_1$ and $G_2$ are reductive algebraic groups both satisfying property $(P)$, then the direct product 
$G_1\times G_2$ satisfies property $(P)$ as well. Let $K$ be a field extension of $k$ and $x$ be a nontrivial element of 
$$H^2(K,Z_{G_1\times G_2})=H^2(K,Z_{G_1})\times H^2(K,Z_{G_2}).$$
We write $x=(x_1,x_2)$. As $G_1$ satisfies property $(P)$, there exists a field extension $L_1$ of $K$ such that $(x_1)_{L_1}$ is not trivial and 
belongs to the image of $\beta_{G_1,L_1}$. If $(x_2)_{L_1}$ is trivial then we are done. Otherwise, as $G_2$ satisfies property $(P)$, there exists a field extension $L_2$ 
of $L_1$ such that $(x_2)_{L_2}$ is not trivial and 
belongs to the image of $\beta_{G_2,L_2}$. As $(x_1)_{L_2}$ belongs to the image of $\beta_{G_1,L_2}$, we see that $x_{L_2}$ is not trivial and belongs to the 
image of $\beta_{G_1\times G_2,L_2}$. This completes the proof of {\it(3)}.

\medskip 

Let $k$ be a field, $M$ a finite separable field extension of $k$, and let $G$ be reductive algebraic group over $M$ which satisfies property $(P)$.
We will now prove that the group $R_{M|k}(G)$ satisfies property $(P)$ as well.
We denote by $d$ the degree of the field extension $M$ of $k$. Let $K$ be a field extension of $k$.
We can write 
$$K\otimes_k M=K_1\times \cdots \times K_s$$
where the $K_i$s are finite separable extensions of $K$ and $M$.  
Let $x$ a nontrivial element 
in 
$$H^2(K,Z_{R_{M|k}(G)})=H^2(K_1,Z_G)\times\cdots\times H^1(K_s,Z_G).$$
We write $x=(x_1,\ldots,x_s)$, and we define $d_x$ to be the sum of the degrees of the $K_i$s over $k$ such that $x_i$ belongs to the image of 
$\beta_{G,K_i}$. The integer $d_x$ is obviously less than or equal to $d$. We prove the desired conclusion by a decreasing induction on $d_x$, 
the case where $d_x$ is equal to 
$d$ being obvious. Suppose that $d_x$ is strictly less than $d$. After permuting the $K_i$s, we can assume for example that $x_1$ is not in the image of 
$\beta_{G,K_1}$. In particular, $x_1$ is not trivial. As $G$ satisfies property $(P)$, there exists a field extension $L_1$ of $K_1$ such that 
$(x_1)_{L_1}$ is not trivial and belongs to the image of $\beta_{G,L_1}$. 
One then easily proves that $x_{L_1}$ is not trivial and $d_{x_{L_1}}$ 
is strictly greater than $d_x$. By the induction hypothesis there is a field extension $L$ of $L_1$ such that $x_L$ is not trivial and belongs to the 
image of $\beta_{G,L}$, completing the proof of {\it(4)}.

\end{proof}
We make now the observation that the radical of a special reductive group does not need to be special. Suppose that $K$ is a separable quadratic extension of $k$.
Recall that the torus $R^1_{K|k}(\mathbb{G}_m)$ is defined as the kernel of the norm map from $R_{K|k}(\mathbb{G}_m)$ to $\mathbb{G}_m$. We denote by $R$ 
the direct product $R^1_{K|k}(\mathbb{G}_m)\times \mathbb{G}_m$.
There is an exact sequence of algebraic tori :
$$
\begin{tikzpicture}
\node (A) at (0,0) {$1$};
\node (B) at (1.5,0) {$\mu_2$};
\node (C) at (3.5,0) {$R$};
\node (D) at (6.2,0) {$R_{K|k}(\mathbb{G}_m)$};
\node (E) at (8.2,0) {$1$};
\node (F) at (2.6,0.2) {$\varphi$};
\draw[->,>=latex] (A) to (B);
\draw[->,>=latex] (B) to (C);
\draw[->,>=latex] (C) to (D);
\draw[->,>=latex] (D) to (E);
\end{tikzpicture}
$$
corresponding to the following exact sequence of $\Gamma$-modules, where $\Gamma$ is the Galois group of $K$ over $k$ : 
$$
\begin{tikzpicture}
\node (A) at (0,0) {$0$};
\node (B) at (1.5,0) {$\mathbb{Z}^2$};
\node (C) at (4.2,0) {$\mathbb{Z}^2$};
\node (D) at (7,0) {$\mathbb{Z}/2\mathbb{Z}$};
\node (E) at (8.6,0) {$0$};
\node (H) at (1.2,-0.7) {$(x,y)$};
\node (I) at (4.3,-0.7) {$(x-y,x+y)$};
\node (J) at (4.2,-1.4) {$(x,y)$};
\node (K) at (6.8,-1.4) {$[x+y]$};
\draw[->,>=latex] (A) to (B);
\draw[->,>=latex] (B) to (C);
\draw[->,>=latex] (C) to (D);
\draw[->,>=latex] (D) to (E);
\draw[->,>=latex] (H) to (I);
\draw[->,>=latex] (J) to (K);
\end{tikzpicture}
$$
Here the nontrivial element of $\Gamma$ acts on $\mathbb{Z}^2$ on the left by permuting the coordinates, on $\mathbb{Z}^2$ in the center by multiplying the 
first coordinate by $-1$ and the second by $1$, and on $\mathbb{Z}/2\mathbb{Z}$ as the identity. We define $G$ to be the quotient : 
$$(\op{SL}_2\times R)/\mu_2,$$  
where $\mu_2$ is embedded diagonally in $\op{SL}_2$ and in $R$ by using the morphism $\varphi$ above. It is readily seen that the derived group of $G$ is 
$\op{SL}_2$ and its coradical 
is $R_{K|k}(\mathbb{G}_m)$. An easy argument then shows that $G$ is special, see for instance Proposition \ref{qsplit} below. However, the radical of $G$ is 
equal to $R=R^1_{K|k}(\mathbb{G}_m)\times \mathbb{G}_m$ and is therefore not special, as it can be seen directly or from the classification of special tori 
recalled in Theorem \ref{specialtori} below.

\section{Classification results}\label{classification}
 We start by classifying special reductive groups over the field $k$ in Theorem \ref{classif2} below. This classification is
 obtained as a straightforward consequence of the results from 
Sections \ref{preliminary} and \ref{derivedcoradical}. However, conditions {\it(1)} and {\it(2)} in Theorem \ref{classif2} 
are very explicit, unlike condition {\it(3)}. Under the additional assumption that the group $G$ is semisimple, reductive of inner type or quasisplit, 
we will make condition {\it(3)} explicit as well, providing an explicit classification in these cases.
\begin{thm}\label{classif2}
Let $G$ be a reductive algebraic group over $k$. Then $G$ is special if and only if the following three conditions hold :
\begin{enumerate}[(1)]
 \item The derived subgroup of $G$ is isomorphic to  
$$R_{K_1|k}(G_1)\times R_{K_2|k}(G_2)\times\cdots\times R_{K_r|k}(G_r)$$
where, for each $i$, the extension $K_i$ of $k$ is finite and separable and the group $G_i$ is isomorphic over $K_i$ to either $\op{SL}_1(A_i)$, where 
$A_i$ is a central simple algebra over $K_i$, or $\op{Sp}_{2n_i}$ for some integer $n_i$.
 \item The coradical $C_G$ of $G$ is a special torus.
 \item For every field extension $K$ of $k$, we have 
 $$\op{Im}(\alpha_{G',K})+\op{Ker}(H^1(K,Z_{G'})\rw H^1(K,Z_{G}))=H^1(K,Z_{G'}).$$
\end{enumerate}
\end{thm}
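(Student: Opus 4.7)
The plan is to prove each implication separately, assembling the preparatory results of Sections \ref{preliminary} and \ref{derivedcoradical}; no new calculation should be required.

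For the forward direction, suppose $G$ is special. Condition (1) is then exactly Proposition \ref{derived} and condition (2) is exactly Proposition \ref{coradical}. For condition (3), I would reuse the commutative triangle with vertices $G_{\op{ad}}(K)$, $H^1(K, Z_{G'})$ and $H^1(K, Z_G)$ and arrows $\alpha_{G',K}$, $\alpha_{G,K}$, and the vertical map induced by the inclusion $Z_{G'}\hookrightarrow Z_G$, as already used in the proof of Proposition \ref{plusdinfos}(3). Since $G$ is special, $\alpha_{G,K}$ is surjective by Proposition \ref{immediate}; given $x\in H^1(K,Z_{G'})$, I would choose $y\in G_{\op{ad}}(K)$ whose image under $\alpha_{G,K}$ equals the image of $x$ in $H^1(K,Z_G)$, so that $x - \alpha_{G',K}(y)$ lies in the kernel of the vertical map. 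Then
$$x\in\op{Im}(\alpha_{G',K})+\op{Ker}(H^1(K,Z_{G'})\rw H^1(K,Z_G)),$$
yielding (3). (Note that $\alpha_{G',K}$ is a homomorphism since $Z_{G'}$ is central in $G'$, so the sum decomposition makes sense.)

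For the converse, assume (1)--(3) and let us verify the hypotheses of Proposition \ref{classif1}. The running assumption of that proposition, namely that $C_G$ is special, is condition (2); its condition (1) is literally our condition (3). What remains is its condition (2): that $\beta_{G',K}$ has trivial kernel for every field extension $K$ of $k$. To obtain this, I would exploit condition (1) to produce, as in the observation made right after Proposition \ref{derived}, the explicit ambient group
$$H := R_{K_1|k}(H_1)\times\cdots\times R_{K_r|k}(H_r),$$
where $H_i=\op{GL}_1(A_i)$ if $G_i\cong\op{SL}_1(A_i)$ and $H_i=G_i$ otherwise. By construction $H$ is a special reductive group with $H'=G'$, so $Z_{H'}=Z_{G'}$, $(H')_{\op{ad}}=(G')_{\op{ad}}$, and hence $\beta_{G',K}=\beta_{H',K}$. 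Proposition \ref{plusdinfos}(1) applied to the special group $H$ then gives triviality of the kernel of $\beta_{G',K}$, and Proposition \ref{classif1} concludes that $G$ is special.

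The theorem is really just an assembly of the results of the previous two sections; the main conceptual point, already implicit in the paragraph following Proposition \ref{derived}, is that the derived subgroup $G'$ itself need not be special, but condition (1) forces it to sit naturally as the derived subgroup of an explicit special ambient group $H$, and it is this $H$ (rather than $G'$ directly) that triggers Proposition \ref{plusdinfos}(1) to supply the missing $\beta_{G',K}$-kernel hypothesis of Proposition \ref{classif1}.
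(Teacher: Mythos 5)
Your proof is correct and takes essentially the same route as the paper: the forward direction is Propositions \ref{derived}, \ref{coradical} and the diagram chase underlying Proposition \ref{classif1}, and the converse is a reduction to Proposition \ref{classif1} once the triviality of $\ker\beta_{G',K}$ is supplied. The only (minor, and perfectly valid) difference is how that last point is justified: where the paper declares it ``easily seen'' from condition (1) -- implicitly via the explicit description of the kernel for $\op{SL}_1(A_i)$ and $\op{Sp}_{2n_i}$ together with Weil restriction and products -- you instead apply Proposition \ref{plusdinfos}(1) to the ambient special group $H=R_{K_1|k}(H_1)\times\cdots\times R_{K_r|k}(H_r)$ constructed in the observation following Proposition \ref{derived}, using $H'\cong G'$.
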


\begin{proof}
 If $G$ is special, then {\it(1)} is satisfied by Proposition \ref{derived}, {\it(2)} by Proposition \ref{coradical} and {\it(3)} by 
 Proposition \ref{classif1}. Suppose now that $G$ satisfies the three conditions. By {\it(1)} it is easily seen that for every field extension $K$ of $k$, 
 the map $\beta_{G',K}$ has trivial kernel. Together with {\it(2)} and {\it(3)}, it implies that $G$ is special, by Proposition \ref{classif1}.
\end{proof}

\subsection{The classification of special semisimple groups}

We provide now the classification of special semisimple groups over the field $k$.
\begin{prop}\label{semisimple}
Let $G$ be a semisimple algebraic group over $k$. Then $G$ is special if and only if it is isomorphic to 
$$R_{K_1|k}(G_1)\times R_{K_2|k_2}(G_2)\times\cdots\times R_{K_r|k}(G_r)$$
where, for each $i$, the extension $K_i$ of $k$ is finite and separable and the group $G_i$ is isomorphic over $K_i$ to $\op{SL}_{n_i}$ or $\op{Sp}_{2n_i}$ 
for some integer $n_i$.
\end{prop}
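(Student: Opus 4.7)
The plan is to deduce Proposition \ref{semisimple} from Theorem \ref{classif2}, exploiting the fact that its three conditions collapse drastically in the semisimple setting. Since \(G = G'\), the centers coincide (\(Z_G = Z_{G'}\)) and the coradical \(C_G\) is trivial, so condition~(2) is automatic (the trivial torus is special) and condition~(3) reduces to the requirement that \(\alpha_{G,K}\) be surjective for every field extension \(K/k\).

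For the ``if'' direction, suppose \(G \cong \prod_i R_{K_i|k}(G_i)\) with each \(G_i\) equal to \(\op{SL}_{n_i}\) or \(\op{Sp}_{2n_i}\); these groups are classically special. Shapiro's identification \(H^1(K, R_{K_i|k}(G_i)) = H^1(K\otimes_k K_i, G_i)\) vanishes term by term on the finite product of fields \(K\otimes_k K_i\), and \(H^1\) of a product of groups is a product, so \(G\) is special.

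For the ``only if'' direction, assume \(G\) is special and semisimple. Proposition \ref{derived} yields the decomposition with each \(G_i\) either \(\op{Sp}_{2n_i}\) or \(\op{SL}_1(A_i)\) for a central simple algebra \(A_i\) over \(K_i\); the goal is to force each such \(A_i\) to be split. Since centers, adjoint quotients and groups of points all split along products and Weil restrictions, Shapiro translates the surjectivity of \(\alpha_{G,K}\) for every \(K/k\) into the surjectivity of \(\alpha_{G_i,L}\) for every field extension \(L/K_i\) (given \(L/K_i\), one of the field factors of \(L\otimes_k K_i\) is \(L\) itself). For \(G_i = \op{SL}_1(A_i)\) I apply this to \(L = K_i((t))\). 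For \(G_i = \op{Sp}_{2n_i}\) there is nothing to check, as the multiplier map \(\op{GSp}_{2n}(L) \to L^\times\) surjects onto all of \(L^\times\) modulo squares.

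The crux is then a valuation-theoretic contradiction. Let \(d\) be the index of \(A_i\) and \(D\) the underlying division algebra of degree \(d\) over \(K_i\). Since \(L = K_i((t))\) is complete with residue field \(K_i\), the base change \(D_L\) remains a division algebra, the unique extension \(w\) of \(v_L\) to \(D_L\) is unramified and hence \(\mathbb{Z}\)-valued, and \(v_L \circ \op{Nrd}_D = d \cdot w\). Combined with the equality \(\op{Nrd}_{A_i}(A_{i,L}^{\times}) = \op{Nrd}_D(D_L^{\times})\) coming from \(A_i \cong M_{n_i/d}(D)\), this gives \(v_L(\op{Nrd}_{A_i}(A_{i,L}^{\times}) \cdot L^{*n_i}) = d\mathbb{Z} + n_i\mathbb{Z} = d\mathbb{Z}\); since \(v_L(t) = 1 \notin d\mathbb{Z}\) when \(d > 1\), the class of \(t\) in \(H^1(L, \mu_{n_i}) = L^{\times}/L^{*n_i}\) is not hit by \(\alpha_{\op{SL}_1(A_i),L}\), a contradiction. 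Hence \(d = 1\) and \(A_i\) is split. The main obstacle is setting up cleanly the valuation identities on the noncommutative division algebra \(D_L\); these rest on the classical arithmetic theory of extensions of discrete valuations to division algebras over complete discretely valued fields, which is standard but nontrivial.
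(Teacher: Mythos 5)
Your proposal is correct and follows essentially the same route as the paper: decompose $G$ via Proposition \ref{derived}, reduce to each factor $G_i$ over $K_i$ (the paper phrases this as ``$G_i$ is a direct factor of $G_{K_i}$'', you phrase it via Shapiro compatibility of the connecting maps), and then kill a nonsplit $A_i$ by the valuation of reduced norms over the Laurent series field $K_i((t))$. The only cosmetic difference is that the paper isolates Lemma \ref{nrd}, showing $[t]$ is nontrivial in $H^1(K_i((t)),\op{SL}_1(A_i))=\op{coker}(\op{Nrd})$, whereas you detect the failure of surjectivity of $\alpha_{\op{SL}_1(A_i),L}$ onto $L^\times/L^{\times n_i}$; both rest on the identical computation that $v\circ\op{Nrd}$ has image $\op{ind}(A_i)\mathbb{Z}$.
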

\begin{proof}
The ``if part'' of the proposition follows directly from Schapiro's lemma and the fact that the split groups $\op{SL}_{n}$ and $\op{Sp}_{2n}$ are special
for every integer $n$. For the ``only if part'', we use Proposition \ref{derived}. As $G$ is its own derived subgroup, we find that $G$ is isomorphic to  
$$R_{K_1|k}(G_1)\times R_{K_2|k}(G_2)\times\cdots\times R_{K_r|k}(G_r)$$
where, for each $i$, the extension $K_i$ of $k$ is finite and separable and the group $G_i$ is isomorphic over $K_i$ to either $\op{SL}_1(A_i)$, where 
$A_i$ is a central simple algebra over $K_i$, or $\op{Sp}_{2n_i}$ for some integer $n_i$. Now, for every index $i$, $G_i$ is a direct factor of $G_{K_i}$, and, as such, 
is a special group. If $G_i$ is isomorphic over $K_i$ to $\op{SL}_1(A_i)$ then Lemma \ref{nrd} below shows that $A_i$ is split, completing the proof of 
the proposition.

\begin{lemme}\label{nrd}
Let $A$ be a central simple algebra over the field $k$. If $\op{SL}_1(A)$ is a special group, then $A$ is split.
\end{lemme}
This result is part of the 
folklore, see for example \cite[Chapter $2$, Exercise $6$]{CSA}. We sketch a proof for the convenience of the reader.
Let $k((t))$ be the field of formal Laurent series.
By \cite[Corollary $29.4$]{Inv} we know that the set $H^1(k((t)),\op{SL}_1(A))$ is naturally 
identified with the cokernel of the reduced norm :
$$ \begin{tikzpicture}
\node (B) at (0.5,0) {$\op{Nrd} : (k((t))\otimes_k A)^*$};
\node (C) at (4,0) {$k((t))^*$};
\draw[->,>=latex] (B) to (C);
\end{tikzpicture}$$
We claim that the class $[t]$ of $t$ in $H^1(k((t)),\op{SL}_1(A))$ is not trivial if $A$ is not split, proving that $\op{SL}_1(A)$ is not special in that case.

\medskip

To see this, we will prove below that the image of the composite :
$$ \begin{tikzpicture}
\node (B) at (0,0) {$v\circ \op{Nrd} : (k((t))\otimes_k A)^*$};
\node (C) at (3.8,0) {$k((t))^*$};
\node (D) at (5.6,0) {$\mathbb{Z}$};
\draw[->,>=latex] (B) to (C);
\draw[->,>=latex] (C) to (D);
\end{tikzpicture}$$ 
where $v$ is the valuation given by $t$, is the ideal spanned by the index $\op{ind}(A)$ of $A$.
Let us first show how it implies the result. If $A$ is not split, 
 then the index of $A$ is not $1$, and we see that $t$, whose valuation is $1$, is not in the image of $\op{Nrd}$, proving that the class $[t]$ in 
 $H^1(k((t)),\op{SL}_1(A))$ is not trivial.

 \medskip

 We now prove the result above.
Let $D$ be the division algebra over $k$ which is Brauer equivalent to $A$. 
 Observe that the valuation $v$ extends to $k((t))\otimes_k D$, the valuation of 
$$d_r t^r + d_{r+1}t^{r+1} +\cdots$$
being $r$ if $d_r$ is not zero. This implies actually that the 
$k((t))$-algebra $k((t))\otimes_k D$ is a division algebra, and is thus the division algebra Brauer-equivalent to 
$k((t))\otimes_k A$. By \cite[Corollary $2.8.10$]{CSA}, the image in $k((t))^*$ of the reduced norms from $(k((t))\otimes_k A)^*$ and $(k((t))\otimes_k D)^*$
are the same. Therefore, in order to prove the result above, we can replace $A$ by $D$. By extending the scalars to $k_s((t))$ - which is contained in a separable closure of 
$k((t))$ -
where the reduced norm becomes the determinant, we see that the valuation of the reduced norm of 
$$d_r t^r + d_{r+1}t^{r+1} +\cdots$$
 where $d_r$ is not zero, is $r\op{dim}_k D$, which is equal to $r\op{ind}(A)$. This completes the proof of the result above. 

\end{proof}

\subsection{The classification of special reductive groups of inner type}
The {\bf split form} of a reductive algebraic group $G$ defined over $k$ is the unique Chevalley group $G_{\op{split}}$ over $k$ which is isomorphic to 
$G$ over the algebraic closure $\bar{k}$ of $k$. The existence and uniqueness of the split form is guaranteed by Chevalley's classification of 
split reductive groups, see for instance \cite{Sp}, and the fact that every reductive group is split over an algebraically closed field. 

\medskip

A reductive algebraic group $G$ is called of {\bf inner type} if it is an inner form of its split form, that is, 
if it is obtained by twisting $G_{\op{split}}$ by a cocycle with values in the group of inner automorphisms of $G_{\op{split}}$, see for instance \cite[§$31$]{Inv}.
If $G$ is a reductive group of inner type, then :
$$Z_G=Z_{G_{\op{split}}},\quad Z_{G'}=Z_{(G_{\op{split}})'} \text{  and  } R_G=R_{G_{\op{split}}}.$$
Consequently, we see that $Z_G$ and $Z_{G'}$ are split diagonalizable groups and $R_G$ is a split torus. 
We provide now the classification of special reductive algebraic groups which are of inner type.

\begin{prop}\label{inner}
Let $G$ be a reductive algebraic group over $k$ of inner type. The intersection $R'_G$ of $R_G$ with $Z_{G'}$ is a finite split diagonalizable group. We fix an isomorphism: 
$$R'_G \simeq \mu_{m_1}\times\cdots\times\mu_{m_q}$$
for some integers $m_j$.
Then $G$ is special if and only if the following two conditions are satisfied :
\begin{enumerate}[(1)]
 \item The derived subgroup $G'$ of $G$ is isomorphic to a direct product :
 $$G_1\times \cdots \times G_s \times \cdots\times G_r \quad \quad \quad (*)$$
 where, for each index $i$ from $1$ to $s$, the group $G_i$ is equal to $\op{SL}_1(A_i)$, with $A_i$ a nonsplit central simple algebra of degree $n_i$ and index 
 $d_i$ over $k$, 
 and, for $i$ from $s+1$ to $r$, the group $G_i$ is equal to either $\op{SL}_{n_i}$ or $\op{Sp}_{2n_i}$ for some integer $n_i$.

\item  The projection onto the first $s$ factors 
 in the direct product decomposition $(*)$ leads to a morphism :
 $$R'_G\simeq \mu_{m_1}\times\cdots\times\mu_{m_q}\rw Z_{G_1\times\cdots\times G_s}=\mu_{n_1}\times\cdots\times\mu_{n_s}$$
$$(x_1,\ldots,x_q)\mapsto (x_1^{a_{1,1}}\cdots x_q^{a_{1,q}},\ldots,x_1^{a_{s,1}}\cdots x_q^{a_{s,q}})$$
for some integers $a_{i,j}$s. We set $b_{i,j}=\frac{a_{i,j}n_i}{m_j}$.
Then the rows of the following matrix :
$$\left[
\begin{array}{cccccccccccc}
d_1 & 0 & \ldots & 0 & b_{1,1} & \ldots & b_{1,q} \\
0 & d_2 & \ldots & 0 & b_{2,1} & \ldots & b_{2,q}\\
\ldots & \ldots & \ldots & \ldots & \ldots & \ldots & \ldots\\
0 & 0 & \ldots & d_s & b_{s,1} & \ldots & b_{s,q}\\
\end{array}
\right]$$
span a saturated sublattice of $\mathbb{Z}^{s+q}$. 
 
\end{enumerate}

\end{prop}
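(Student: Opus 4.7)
The plan is to deduce Proposition \ref{inner} from Theorem \ref{classif2} by translating its three conditions under the inner-type hypothesis. Since inner automorphisms of $G_{\op{split}}$ preserve each simple component of $(G_{\op{split}})'_{\bar k}$, the Galois action on the set of absolutely simple factors of $G'_{\bar k}$ is trivial, so in Theorem \ref{classif2}(1) every field $K_i$ equals $k$; regrouping the $\op{SL}_1(A_i)$ with $A_i$ split as $\op{SL}_{n_i}$ yields condition (1) of the proposition. The radical $R_G = R_{G_{\op{split}}}$ is a split torus, so the coradical $C_G = R_G/R'_G$ is a split torus and therefore special, making Theorem \ref{classif2}(2) automatic.

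Everything then reduces to showing that condition (3) of Theorem \ref{classif2} translates to condition (2) of the proposition. From $Z_G = R_G \cdot Z_{G'}$ with $R_G \cap Z_{G'} = R'_G$ I would extract the exact sequence
$$1 \to R'_G \to R_G \times Z_{G'} \to Z_G \to 1$$
(anti-diagonal followed by multiplication). Hilbert 90 applied to the split torus $R_G$ then gives $\op{Ker}(H^1(K, Z_{G'}) \to H^1(K, Z_G)) = \op{Im}(H^1(K, R'_G) \to H^1(K, Z_{G'}))$. Writing $Z_{G'} = \prod_i Z_{G_i}$, on coordinates $i > s$ the split groups $\op{SL}_{n_i}, \op{Sp}_{2n_i}$ are special, so $\alpha_{G_i, K}$ is surjective and condition (3) holds automatically there. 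For $i \le s$, one has $\op{Im}(\alpha_{G_i, K}) = \op{Nrd}((A_i \otimes_k K)^*)/(K^*)^{n_i}$, with quotient $K^*/\op{Nrd}((A_i \otimes_k K)^*)$. Condition (3) thus amounts to the surjectivity, for every field extension $K/k$, of the induced map
$$\varphi_K \colon H^1(K, R'_G) \longrightarrow \prod_{i=1}^{s} K^*/\op{Nrd}((A_i \otimes_k K)^*).$$
A cocycle computation, lifting a $\mu_{m_j}$-cocycle and applying the power map $x \mapsto x^{a_{i,j}} \colon \mu_{m_j} \to \mu_{n_i}$, identifies $\varphi_K$ with $(c_j)_j \mapsto (\prod_j c_j^{b_{i,j}} \bmod \op{Nrd})_i$, where integrality of $b_{i,j} = a_{i,j} n_i/m_j$ is forced by the well-definedness of $\mu_{m_j} \to \mu_{n_i}$. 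Moreover, writing $A_i = M_{n_i/d_i}(D_i)$, the scalar matrix $\op{diag}(a, 1, \ldots, 1) \in M_{n_i/d_i}(D_i \otimes K)$ has reduced norm $a^{d_i}$, so $(K^*)^{d_i} \subset \op{Nrd}((A_i \otimes K)^*)$ and the target of $\varphi_K$ has exponent dividing $d_i$ in the $i$-th coordinate.

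A Smith normal form computation, using that the matrix from the statement has rank $s$ (the $d_i$ being positive), shows that its row span is saturated if and only if the $\mathbb{Z}$-linear reduction $\bar B \colon \mathbb{Z}^q \to \prod_{i=1}^{s} \mathbb{Z}/d_i$, $(y_j) \mapsto (\sum_j b_{i,j} y_j \bmod d_i)_i$, is surjective. The easy implication — saturation implies surjectivity of $\varphi_K$ — would proceed by explicit construction: given $(v_i)$ with lifts $\tilde v_i \in K^*$, the right-inverse afforded by saturation yields integer vectors $y^{(i)} \in \mathbb{Z}^q$ with $\sum_j b_{i', j} y^{(i)}_j \equiv \delta_{i i'} \pmod{d_{i'}}$, and then $c_j := \prod_i \tilde v_i^{y^{(i)}_j}$ satisfies $\varphi_K((c_j)) = (v_i)$ in the exponent-$d_i$ quotient. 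I expect the main obstacle to be the converse direction, deducing the purely $\mathbb{Z}$-level statement from cohomological surjectivity over every $K/k$. My plan there is to specialize to $K = k((t))$: by the valuation argument used in the proof of Lemma \ref{nrd}, the $t$-adic valuation $v$ satisfies $v(\op{Nrd}((A_i \otimes K)^*)) = d_i \mathbb{Z}$, inducing surjections $K^*/\op{Nrd} \twoheadrightarrow \mathbb{Z}/d_i$. Applying the surjectivity of $\varphi_K$ to the test element $(t^{z_i})_{i \le s}$ for arbitrary $(z_i) \in \mathbb{Z}^s$ and composing with $v$ then exhibits $(y_j) := (v(c_j))$ as a preimage of $(z_i)$ under $\bar B$, showing $\bar B$ is surjective.
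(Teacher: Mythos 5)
Your proposal is correct and follows essentially the same route as the paper: deduce the statement from Theorem \ref{classif2}, use the split radical and Hilbert 90 to identify $\op{Ker}(H^1(K,Z_{G'})\rw H^1(K,Z_G))$ with the image of $H^1(K,R'_G)$, discard the split factors where $\alpha_{G_i,K}$ is surjective, compute the induced map via the exponents $b_{i,j}=a_{i,j}n_i/m_j$, and test the resulting condition over $K=k((t))$ using the valuation of reduced norms ($v(\op{Nrd}((A_i)_{k((t))}^*))\subseteq d_i\mathbb{Z}$, $(K^*)^{d_i}\subseteq\op{Nrd}((A_i)_K^*)$). The only real divergence is in the analogue of Lemma \ref{final}: you interpose the purely arithmetic criterion that $\bar B:\mathbb{Z}^q\rw\prod_i\mathbb{Z}/d_i$ be surjective (equivalent to saturation by a Smith normal form argument, which checks out) and argue both directions by explicit preimages, whereas the paper argues one direction with a split subtorus plus Hilbert 90 and the other with a primitive vector and divisibility of valuations; both variants are valid and rest on the same inputs.
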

\begin{proof}
 
First we prove that if $G$ is special then it satisfies {\it (1)}. 
As $G$ is of inner type, it is obtained by twisting the split form $G_{\op{split}}$ of $G$ by a cocycle whose class is in $H^1(k,(G_{\op{split}})_{\op{ad}})$.
As the last set is equal to $H^1(k,(G'_{\op{split}})_{\op{ad}})$, and the group $G'_{\op{split}}$ is a direct product of split absolutely simple simply connected groups -
because it is the derived subgroup of the special split reductive group $G_{\op{split}}$ -
we see that $G'$, which is obtained from $G'_{\op{split}}$ by the same twisting procedure, is a direct product of absolutely simple simply connected groups of type A and 
C. By Proposition \ref{derived}, the factors are either of inner type A or split of type C, proving that $G$ satisfies {\it(1)}.

\medskip

 We suppose now that $G$ satisfies {\it(1)}, and we claim that $G$ is special if and only if it satisfies {\it(2)}.
It is readily seen that $G$ satisfies the first assertion of Theorem \ref{classif2}, and also the second, as the coradical of $G$ is a split torus - it is the 
coradical of the split form $G_{\op{split}}$ of $G$. 
Therefore to prove the claim, it suffices to show that {\it(2)} is satisfied if and only if the third assertion of Theorem \ref{classif2} is satisfied.
In order to do this, we first identify the kernel of the following morphism :
$$H^1(K,Z_{G'})\rw H^1(K,Z_{G}).$$
There is an isomorphism from the group $Z_G$ to $R_G\times (Z_{G'}/R_G')$ such that the projection of the subgroup 
$Z_{G'}$ on the second factor 
is the natural projection from $Z_{G'}$ onto $Z_{G'}/R_G'$. This follows from the fact that the inclusion of $G'$ into $G$ provides an isomorphism 
from $G'/R_G'$ to $G/R_G$, and therefore an isomorphism from the center $Z_{G'}/R_G'$ of the first group to the center $Z_G/R_G$ of the second group, together 
with the fact that $Z_G$ is a split diagonalizable group and $R_G$ is a split torus, implying that the exact sequence 
$$ \begin{tikzpicture}

\node (A) at (-0.1,0) {$1$};
\node (B) at (1.2,0) {$R_G$};
\node (C) at (2.9,0) {$Z_G$};
\node (D) at (4.9,0) {$Z_G/R_G$};
\node (E) at (6.5,0) {$1$};

\draw[->,>=latex] (A) to (B);
\draw[->,>=latex] (B) to (C);
\draw[->,>=latex] (C) to (D);
\draw[->,>=latex] (D) to (E);

\end{tikzpicture}$$
splits. Let $K$ be a field extension of $k$. By using the isomorphism above and Hilbert's theorem 90 the set $H^1(K,Z_G)$ is identified with $H^1(K,Z_{G'}/R_G')$ 
and the morphism from $H^1(K,Z_{G'})$ to $H^1(K,Z_G)$ induced by the inclusion with the morphism:
$$ \begin{tikzpicture}
\node (B) at (0,0) {$H^1(K,Z_{G'})$};
\node (C) at (3.5,0) {$H^1(K,Z_{G'}/R_G')$};
\draw[->,>=latex] (B) to (C);
\end{tikzpicture}$$
given by the projection from $Z_{G'}$ to $Z_{G'}/R_G'$. This proves the following equality
$$\op{Ker}(H^1(K,Z_{G'})\rw H^1(K,Z_G))=\op{Im}(H^1(K,R_G')\rw H^1(K,Z_{G'})).$$ 
Observe now that the map $\alpha_{G',K}$ is the direct product of the maps $\alpha_{G_i,K}$, where $i$ ranges from $1$ to $r$.
As the group $G_i$ is special for $i$ between $s+1$ and $r$ we know by Proposition \ref{immediate} that $\alpha_{G_i,K}$ is surjective.
As a consequence, the third assertion in Theorem \ref{classif2} is satisfied by the group $G$ if and only if 
$$\op{Im}(\alpha_{G_1\times\cdots\times G_s,K})+\op{Im}(H^1(K,R_G')\rw H^1(K,Z_{G_1\times\cdots\times G_s}))=H^1(K,Z_{G_1\times\cdots\times G_s})$$
where the morphism
$$H^1(K,R_G')\rw H^1(K,Z_{G_1\times\cdots\times G_s})$$
is induced by the composite $\varphi$ of the inclusion of $R_G'$ in $Z_{G'}$ followed by the projection on $Z_{G_1\times\cdots\times G_s}$.
The morphism $\varphi$ has the following explicit description :
$$\varphi : \mu_{m_1}\times\cdots\times\mu_{m_q}\rw \mu_{n_1}\times\cdots\times\mu_{n_r}$$
$$(x_1,\ldots,x_q)\mapsto (x_1^{a_{1,1}}\cdots x_q^{a_{1,q}},\ldots,x_1^{a_{r,1}}\cdots x_q^{a_{r,q}})$$
Its corresponding morphism in fppf cohomology is given by:
$$K^*/(K^*)^{(m_1)}\times\cdots\times K^*/(K^*)^{(m_q)}\rw K^*/(K^*)^{(n_1)}\times\cdots\times K^*/(K^*)^{(n_s)}$$
$$([x_1],\ldots,[x_q])\mapsto ([x_1^{b_{1,1}}\cdots x_q^{b_{1,q}}],\ldots,[x_1^{b_{s,1}}\cdots x_q^{b_{s,q}}])$$
where $b_{i,j}=\frac{a_{i,j}n_i}{m_j}$, and $(K^*)^{(n)}$ denotes the group of $n$th power of elements of $K^*$.
Furthermore, for each index $i$ from $1$ to $s$, the map $\alpha_{G_i,K}$ :
$$ \begin{tikzpicture}
\node (B) at (0,0) {$\op{PSL}_1(A_i)(K)=(A_i)_K^*/K^*$};
\node (C) at (5.6,0) {$H^1(K,\mu_{n_i})=K^*/(K^*)^{(n_i)}$};
\draw[->,>=latex] (B) to (C);
\end{tikzpicture}$$
maps the class of an element $g$ of $(A_{i})_K^*$ to the class of its reduced norm.
Therefore, Lemma \ref{final} below completes the proof of the proposition.

\begin{lemme}\label{final}
The following conditions are equivalent :
\begin{enumerate}[(1)]
 \item the rows of the following matrix :
$$\left[
\begin{array}{cccccccccccc}
d_1 & 0 & \ldots & 0 & b_{1,1} & \ldots & b_{1,q} \\
0 & d_2 & \ldots & 0 & b_{2,1} & \ldots & b_{2,q}\\
\ldots & \ldots & \ldots & \ldots & \ldots & \ldots & \ldots\\
0 & 0 & \ldots & d_s & b_{s,1} & \ldots & b_{s,q}\\
\end{array}
\right]$$
 span a saturated sublattice of $\mathbb{Z}^{s+q}$.
 \item for every field extension $K$ of $k$, the map :
$$\gamma_K : \prod_{i=1}^s\op{Nrd}((A_i)_K^*)\times (K^*)^q\rw (K^*)^r$$
$$(y_1,\ldots,y_s,x_1,\ldots,x_q)\mapsto (x_1^{b_{1,1}}\cdots x_q^{b_{1,q}}y_1,\ldots,x_1^{b_{r,1}}\cdots x_q^{b_{r,q}}y_s)$$
is surjective.
\end{enumerate}
\end{lemme}
{\it Proof of Lemma \ref{final}}.
Suppose that {\it(1)} hold. Let $M$ be the matrix in {\it(1)}.
First, as the rows of $M$ are linearly independent, the morphism of algebraic tori :
$$\mathbb{G}_m^{q+s}\rw \mathbb{G}_m^s$$
$$(y_1,\ldots,y_s,x_1,\ldots,x_q)\mapsto (x_1^{b_{1,1}}\cdots x_q^{b_{1,q}}y_1^{d_1},\ldots,x_1^{b_{s,1}}\cdots x_q^{b_{s,q}}y_s^{d_s})$$
is surjective. Its kernel is precisely the subtorus of $\mathbb{G}_m^{s+q}$ whose character lattice is the quotient of $\mathbb{Z}^{s+q}$ by the 
rows of $M$. By assumption this kernel is therefore a split torus. By Hilbert's theorem 90, for every field extension $K$ of $k$, the map :
$$(K^*)^{s+q}\rw (K^*)^s$$
$$(y_1,\ldots,y_s,x_1,\ldots,x_q)\mapsto (x_1^{b_{1,1}}\cdots x_q^{b_{1,q}}y_1^{d_1},\ldots,x_1^{b_{s,1}}\cdots x_q^{b_{s,q}}y_s^{d_s})$$
induced on the $K$-points is surjective. As for each index $i$ between $1$ and $s$ the subgroup $\op{Nrd}((A_i)_K^*)$ of $K^*$ contains 
$(K^*)^{(d_i)}$, we see that the map $\gamma_K$ is surjective.

\medskip

Suppose now that {\it (1)} fails. There exists a primitive element 
$(c_1,\ldots,c_{s})$ of $\mathbb{Z}^{s}$
such that the element 
$$\sum_{i=1}^s c_i(0,\ldots,d_i,\ldots,0,b_{i,1},\ldots,b_{i,q})$$
is divisible, say by $d$, in $\mathbb{Z}^{s+q}$. Let $K$ be the field of Laurent series $k((t))$ and $v$ the valuation defined by $t$. 
As the element $(c_1,\ldots,c_{s})$ is primitive, 
the map 
$$(K^*)^s\rw K^*$$
$$(z_1,\ldots,z_s)\mapsto z_1^{c_1}\ldots z_s^{c_s}$$
is surjective.
We claim now that if $(z_1,\ldots,z_s)$ belongs to the image of 
$\gamma_K$ then the valuation of $z_1^{c_1}\ldots z_s^{c_s}$ is divisible by $d$, proving that $\gamma_K$ is not surjective. 

\medskip
To prove the claim, let :
 $$(y_1,\ldots,y_s,x_1,\ldots,x_q)\in \prod_{i=1}^s\op{Nrd}((A_i)_K^*)\times (K^*)^q,$$
 and 
 $$(z_1,\ldots,z_s)=\gamma_K(y_1,\ldots,y_s,x_1,\ldots,x_q).$$ 
 We have : 
$$z_1^{c_1}\ldots z_s^{c_s}=y_1^{c_1}\ldots y_s^{c_s}x_1^{\sum_{i=1}^s c_i b_{i,1}}\ldots x_q^{\sum_{i=1}^s c_i b_{i,q}}.$$
For every $i$ from $1$ to $s$ the integer $v(y_i)$ is divisible by $d_i$, as $y_i$ is a reduced norm of the central simple algebra $(A_i)_K$ over $K$ and the 
index of $A_i$ over $k$ is $d_i$. Therefore $v(y_i^{c_i})$ is divisible by $c_id_i$ , hence by $d$.
Moreover, for every index $j$ between $1$ and $q$, the sum $\sum_{i=1}^s c_i b_{i,j}$ is also divisible by $d$, completing the proof of the claim.

\end{proof}
Here is an example of a situtation where condition {\it(2)} in Proposition \ref{inner} is easy to work out.
Suppose that the group $R'_G$ decomposes along the direct factor decomposition $(*)$ in {\it(1)}. That is,
$$R'_G\simeq \mu_{m_1}\times\cdots\times \mu_{m_r}$$
where, for each index $i$, $\mu_{m_i}$ is a subgroup of $Z_{G_i}$. In this setting, condition $(2)$ in Proposition \ref{inner} is equivalent to the fact 
that for every $i$ from $1$ to $s$ the integers $d_i$ and $\frac{n_i}{m_i}$ are relatively prime.



\subsection{The classification of special quasisplit groups}
Recall that a reductive group $G$ over a field $k$ is called {\bf quasisplit} if it possesses a Borel subgroup defined over $k$, see for instance \cite[III, $2.2$]{Coho}, or, 
in other words, if the variety of Borel subgroups of $G$ has a rational point. We show now that a quasisplit group is special if and only if its derived subgroup and coradical 
are special.  
\begin{prop}\label{qsplit}
 Let $G$ be a reductive algebraic group over $k$. Then $G$ is quasisplit and special if and only if there exists an exact sequence of algebraic groups :
 $$ \begin{tikzpicture}
\node (A) at (0,0) {$1$};
\node (B) at (1.2,0) {$D$};
\node (C) at (3,0) {$G$};
\node (D) at (4.8,0) {$C$};
\node (E) at (6,0) {$1$};
\draw[->,>=latex] (A) to (B);
\draw[->,>=latex] (B) to (C);
\draw[->,>=latex] (C) to (D);
\draw[->,>=latex] (D) to (E);
\end{tikzpicture}$$
 where $D$ is isomorphic to a direct product :
 $$R_{K_1|k}(G_1)\times R_{K_2|k}(G_2)\times\cdots\times R_{K_r|k}(G_r)$$
 where, for every index $i$, $K_i$ is a finite separable extension of $k$, $G_i$ is equal to either $\op{SL}_{n_i}$ 
 or $\op{Sp}_{2n_i}$ for some integer $n_i$, and the group $C$ is a special torus over $k$.
In that case, $D$ is the derived subgroup of $G$ and $C$ is the coradical of $G$.
 \end{prop}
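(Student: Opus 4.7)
The plan is to prove the two implications using the earlier structural results, together with the basic principle that a reductive group is quasi-split if and only if its derived subgroup is quasi-split.

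\medskip
For the forward implication, suppose $G$ is quasi-split and special. By Proposition~\ref{derived}, the derived subgroup $G'$ decomposes as $\prod_i R_{K_i|k}(G_i)$ with each $G_i$ either $\op{SL}_1(A_i)$ or $\op{Sp}_{2n_i}$, and by Proposition~\ref{coradical} the coradical $C_G$ is a special torus. To see that each $\op{SL}_1(A_i)$ is in fact $\op{SL}_{n_i}$, I would use that intersecting a Borel of $G$ defined over $k$ with $G'$ produces a Borel of $G'$ defined over $k$, so that $G'$ is quasi-split. Since Weil restriction preserves quasi-splitness and it is inherited by each direct factor, each $G_i$ is quasi-split over $K_i$. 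But an absolutely simple simply connected group of inner type A is quasi-split if and only if it is split, which forces $A_i$ to be split and $G_i \cong \op{SL}_{n_i}$; the factors of type C are already split. Setting $D = G'$ and $C = C_G$ then gives the required exact sequence.

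\medskip
For the backward implication, start from the given exact sequence. Because $C$ is a torus, $G' \subseteq D$; conversely, each factor $\op{SL}_{n_i}$ or $\op{Sp}_{2n_i}$ equals its own derived subgroup, so $D$ is perfect and $D \subseteq G'$, giving $D = G'$ and $C = G/G' = C_G$. To verify that $G$ is quasi-split, I would observe that each $G_i$ is split, hence quasi-split, and that Weil restrictions and direct products preserve quasi-splitness; therefore $D$ is quasi-split, so $(G')_{\op{ad}} = G_{\op{ad}}$ is quasi-split, and finally $G$ itself. For specialness, note that $\op{SL}_{n_i}$ and $\op{Sp}_{2n_i}$ are special by Grothendieck's theorem (Theorem~\ref{algclos}), and specialness is preserved by direct products and by Weil restriction (the latter via Shapiro's lemma), so $D$ is special; $C$ is special by assumption. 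Over any field extension $L$ of $k$, the exact sequence of pointed sets
$$H^1(L,D) \longrightarrow H^1(L,G) \longrightarrow H^1(L,C)$$
then has both outer terms trivial, forcing $H^1(L,G)$ to be trivial.

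\medskip
The main technical point I expect to be delicate is the equivalence between quasi-splitness of $G$ and of $G'$, used in both directions. This rests on the identification $G_{\op{ad}} = (G')_{\op{ad}}$ (which follows from $G = G' \cdot Z_G$) together with the standard fact that a reductive group is quasi-split if and only if its adjoint quotient is, proved by taking preimages of Borels in one direction and intersections with the derived subgroup in the other. Everything else reduces to combining Propositions~\ref{derived} and~\ref{coradical} with a short diagram chase in non-abelian cohomology.
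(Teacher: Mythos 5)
Your proposal is correct and follows essentially the same route as the paper: both directions rest on Propositions \ref{derived} and \ref{coradical}, the identification $D=G'$ and $C=C_G$ via perfectness of $D$ and commutativity of $C$, the exact sequence of pointed sets in cohomology for specialness, and the principle that $G$ is quasisplit if and only if $G'$ is. The only cosmetic difference is how that last principle is justified (you intersect a rational Borel with $G'$, respectively pass through $G_{\op{ad}}=(G')_{\op{ad}}$, while the paper notes that $G$ and $G'$ share the same variety of Borel subgroups), and how splitness of the type A factors is extracted (you invoke that Weil restriction and direct factors preserve quasisplitness plus ``inner type A quasisplit $\Rightarrow$ split'', the paper argues directly with $G_{K_i}$); these are equivalent arguments.
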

\begin{proof}
If such an exact sequence exists, then, as $D$ and $C$ are special, it follows readily from the derived exact sequence of pointed sets in fppf-cohomology that $G$ is special
as well. Moreover, as $C$ is commutative, the derived subgroup $G'$ is contained in $D$. Now, as $D$ is semisimple, it is equal to its own derived subgroup, and is in particular
contained in $G'$. Finally, we see that $D$ is equal to $G'$, and the fact that $C$ is the coradical of $G$ follows readily. 
Now, as $G$ and $G'$ share the same variety of Borel subgroups, and $G'$ is quasisplit, we see that $G$ is quasisplit as well.

\medskip
Suppose now that $G$ is quasisplit and special. By the same argument as above, the derived subgroup $G'$ is quasisplit. Moreover, by Proposition \ref{derived}, $G'$ is isomorphic to 
 $$R_{K_1|k}(G_1)\times R_{K_2|k}(G_2)\times\cdots\times R_{K_r|k}(G_r)$$
where, for each index $i$, the extension $K_i$ of $k$ is finite and separable and the group $G_i$ is isomorphic over $K_i$ to either $\op{SL}_1(A_i)$, where 
$A_i$ is a central simple algebra over $K_i$, or $\op{Sp}_{2n_i}$ for some integer $n_i$. For each index $i$ such that $G_i$ is isomorphic to 
$\op{SL}_1(A_i)$, we see that $\op{SL}_1(A_i)$ is a direct factor of $G_{K_i}$. As $G_{K_i}$ is quasisplit, this forces $\op{SL}_1(A_i)$ to be quasisplit as well, 
implying that $A_i$ is split. By Proposition \ref{coradical}, the coradical of $G$ is special. We thus have an exact sequence as in the proposition, with $D$ the derived 
subgroup $G'$ of $G$ and $C$ the coradical $C_G$.
\end{proof}

Let $G$ be an arbitrary reductive group over the field $k$. There is a unique inner form of $G$ that is quasisplit, called the {\bf quasisplit form} $G_{\op{qsplit}}$
of $G$, see for instance \cite{Sp}. 
In the following proposition, we prove that the quasisplit form of $G$ is special if $G$ is special. This is very reasonable, as we expect $G_{\op{qsplit}}$ 
to be less ``twisted'' than $G$.

\begin{prop}
Let $G$ be a special reductive group over $k$. The quasisplit form of $G$ is special as well.  
\end{prop}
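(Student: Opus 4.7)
The plan is to reduce everything to Proposition \ref{qsplit}, which characterizes reductive groups that are simultaneously quasisplit and special by the existence of an exact sequence
$$1\rw D\rw G\rw C\rw 1$$
with $D$ a product of Weil restrictions of $\op{SL}_{n}$ and $\op{Sp}_{2n}$ and $C$ a special torus. Since $G_{\op{qsplit}}$ is quasisplit by construction, we only need to exhibit such an exact sequence for $G_{\op{qsplit}}$.

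First I would record two structural compatibilities between $G$ and its inner form $G_{\op{qsplit}}$. Writing $G_{\op{qsplit}}=G^\xi$ for some cocycle $\xi$ with values in $G_{\op{ad}}$, the derived subgroup is preserved by every inner automorphism, so twisting $G$ by $\xi$ twists $G'$ by the same cocycle; hence $(G_{\op{qsplit}})'$ is the inner form of $G'$ defined by $\xi$. Moreover, because $G/G'$ is commutative, the inner automorphisms induce the identity on the quotient, so the Galois action on $G_{\op{qsplit}}/(G_{\op{qsplit}})'$ coincides with that on $G/G'$. In other words, $C_{G_{\op{qsplit}}}\cong C_G$ as $k$-tori, and $(G_{\op{qsplit}})'$ is an inner form of $G'$.

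Next, I would identify $(G_{\op{qsplit}})'$ with the quasisplit form of $G'$. Indeed, any Borel subgroup of $G_{\op{qsplit}}$ defined over $k$ intersects the derived subgroup in a Borel subgroup of $(G_{\op{qsplit}})'$ defined over $k$, so $(G_{\op{qsplit}})'$ is a quasisplit inner form of $G'$, which by uniqueness equals $(G')_{\op{qsplit}}$. Now, since $G$ is special, Proposition \ref{derived} tells us that
$$G'\simeq R_{K_1|k}(G_1)\times\cdots\times R_{K_r|k}(G_r),$$
where each $G_i$ is $\op{SL}_1(A_i)$ or $\op{Sp}_{2n_i}$. Passing to the quasisplit form commutes with Weil restriction and with direct products; on each factor, $\op{Sp}_{2n_i}$ is already split, while the quasisplit inner form of $\op{SL}_1(A_i)$ is $\op{SL}_{n_i}$ (where $n_i=\deg A_i$), since $\op{SL}_1(A_i)$ is an inner form of the split group $\op{SL}_{n_i}$. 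Therefore $(G_{\op{qsplit}})'$ has exactly the shape required by Proposition \ref{qsplit}.

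Finally, Proposition \ref{coradical} gives that $C_G$ is a special torus, and by the first paragraph $C_{G_{\op{qsplit}}}\cong C_G$, so the coradical of $G_{\op{qsplit}}$ is a special torus as well. The tautological exact sequence
$$1\rw (G_{\op{qsplit}})'\rw G_{\op{qsplit}}\rw C_{G_{\op{qsplit}}}\rw 1$$
therefore satisfies the hypotheses of Proposition \ref{qsplit}, from which we conclude that $G_{\op{qsplit}}$ is special. The only mildly delicate point — and the step I would double-check most carefully — is the identification $(G_{\op{qsplit}})'=(G')_{\op{qsplit}}$ together with the description of the quasisplit form of $\op{SL}_1(A)$; everything else is a direct combination of the previous propositions.
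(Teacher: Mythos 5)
Your proposal is correct and follows essentially the same route as the paper: identify the coradical of $G_{\op{qsplit}}$ with that of $G$, identify $(G_{\op{qsplit}})'$ with the quasisplit form of $G'$ (which, by Proposition \ref{derived}, is a product of Weil restrictions of groups $\op{SL}_{n_i}$ and $\op{Sp}_{2n_i}$), and conclude via Propositions \ref{coradical} and \ref{qsplit}. The paper merely states these identifications without the details you supply, so your write-up is a fleshed-out version of the same argument.
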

\begin{proof}
The groups $G$ and $G_{\op{qsplit}}$ share the same coradical, and $(G_{\op{qsplit}})'$ is the quasisplit form of $G'$. Therefore, 
by Proposition \ref{derived} and \ref{coradical}, the derived subgroup and coradical of $G_{\op{qsplit}}$ are special, proving that $G_{\op{qsplit}}$ is special.

\end{proof}

\section{Special tori}\label{tori}
In this section we give the classification of special tori after Colliot-Thélène. This classification is implicitely contained in \cite{CTS} and 
explicitely given in the first version of \cite{BR} on the ArXiv but not in the published version. For this reason we thought that it would be a good idea to 
include it in the present paper. We actually reproduce the proof from \cite{BR}.
The relevance of the classification of special tori for our problem of classifying reductive groups is twofold : firstly, tori are reductive groups and secondly, 
by Proposition \ref{coradical}, the coradical of a special reductive group is a special torus.

\medskip

Let $k$ be a base field, $k_s$ a fixed separable closure and $\Gamma$ the absolute Galois group $\op{Gal}(k_s|k)$ of $k$. 
A continuous $\Gamma$-module is called a {\bf permutation $\Gamma$-module} if it is a free $\mathbb{Z}$-module possessing a basis which is 
permuted by the action of $\Gamma$. A continuous $\Gamma$-module is called {\bf invertible} if it is a direct factor of a permutation 
$\Gamma$-module.

\begin{thm}[Colliot-Thélène]\label{specialtori}
Let $T$ be a torus defined over a field $k$. The torus $T$ is special if and only if 
the character lattice of $T_{k_s}$ is invertible.
\end{thm}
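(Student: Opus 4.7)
The plan is to prove the two directions of the equivalence separately.

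For the ``if'' direction, suppose $\hat{T}$ is invertible, so that $T$ is a direct factor of a quasi-trivial torus $Q = \prod_{i=1}^{r} R_{K_i|k}(\mathbb{G}_m)$ for some finite separable extensions $K_i/k$. For any field extension $K/k$, Shapiro's lemma identifies $H^1(K, R_{K_i|k}(\mathbb{G}_m))$ with $H^1(K \otimes_k K_i, \mathbb{G}_m)$, and separability of $K_i/k$ makes $K \otimes_k K_i$ a finite product of field extensions of $K$, so Hilbert's theorem $90$ yields the vanishing of this group. Hence $H^1(K, Q) = 0$, and since $H^1(K, T)$ is a direct factor of $H^1(K, Q)$, we conclude $H^1(K, T) = 0$: $T$ is special.

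For the ``only if'' direction, suppose $T$ is special. I would choose a surjection of continuous $\Gamma$-lattices $\hat{P} \twoheadrightarrow \hat{T}$ in which $\hat{P}$ is a finitely generated permutation module; such a surjection exists because $\hat{T}$ is finitely generated over $\Gamma$. Denoting the kernel by $\hat{F}$ and dualizing yields an exact sequence of $k$-tori
$$1 \to T \to P \to F \to 1$$
in which $P$ is quasi-trivial. The long exact sequence in fppf cohomology, combined with the vanishing $H^1(K, T) = 0$ for every field extension $K/k$, forces the map $P(K) \to F(K)$ to be surjective for every such $K$. Specializing to $K = k(F)$ and lifting the generic point of $F$ produces a rational $k$-morphism $s\colon F \dashrightarrow P$ splitting the projection $\pi\colon P \to F$ birationally.

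The crux of the proof is then to promote the rational section $s$ to a genuine algebraic splitting of $1 \to T \to P \to F \to 1$. The key ingredient is that for exact sequences of $k$-tori the extension class lies in $\op{Ext}^1_\Gamma(\hat{T}, \hat{F})$ and can be read off any rational section: after normalizing so that $s(1) = 1$, the $2$-cocycle $c(x, y) = s(xy) s(x)^{-1} s(y)^{-1}$ is a rational map $F \times F \dashrightarrow T$, and by standard results on rational cocycles with values in commutative algebraic groups it can be trivialized by modifying $s$ by a suitable rational map $F \dashrightarrow T$. This produces a regular group-theoretic section $F \to P$; the sequence of tori then splits, and dualization gives $\hat{P} \simeq \hat{T} \oplus \hat{F}$ as $\Gamma$-modules, exhibiting $\hat{T}$ as a direct summand of the permutation lattice $\hat{P}$, i.e., as invertible.

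The main obstacle is precisely this rational-to-algebraic upgrade of the section $s$. Everything else amounts to standard diagram-chasing with fppf cohomology and the anti-equivalence between $k$-tori and continuous $\Gamma$-lattices.
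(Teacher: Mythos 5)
Your ``if'' direction is correct and is exactly the paper's argument (Shapiro plus Hilbert 90). The gap is in the ``only if'' direction, at precisely the step you call the crux: a rational section of $\pi\colon P\to F$ cannot in general be upgraded to a group-theoretic splitting of $1\to T\to P\to F\to 1$, and the ``standard result on rational cocycles'' you invoke to trivialize $c(x,y)=s(xy)s(x)^{-1}s(y)^{-1}$ does not exist. The obstruction to a rational section is the class of the generic fibre of $\pi$ in $H^1(k(F),T)$, whereas the obstruction to a group splitting is the class of $0\to\hat F\to\hat P\to\hat T\to 0$ in $\op{Ext}^1_\Gamma(\hat T,\hat F)$; the second need not vanish when the first does. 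A concrete counterexample meeting all of your hypotheses: take $T=\mathbb{G}_m$ (which is special), $K|k$ a separable quadratic extension, $\hat P=\mathbb{Z}[\op{Gal}(K|k)]$ and the augmentation $\hat P\twoheadrightarrow\hat T=\mathbb{Z}$. Dualizing gives $1\to\mathbb{G}_m\to R_{K|k}(\mathbb{G}_m)\to F\to 1$ with $\hat F=I_{K|k}$ the augmentation ideal. Here $\pi$ is a $\mathbb{G}_m$-torsor, hence Zariski-locally trivial, so a rational section certainly exists; but the lattice sequence $0\to I_{K|k}\to\mathbb{Z}[\op{Gal}(K|k)]\to\mathbb{Z}\to 0$ does not split, since $H^1(\op{Gal}(K|k),\mathbb{Z}[\op{Gal}(K|k)])=0$ while $H^1(\op{Gal}(K|k),I_{K|k})=\mathbb{Z}/2\mathbb{Z}$. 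So in this situation your $2$-cocycle cannot be killed by any rational coboundary.

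The example also shows the strategy is structurally too strong: you are in effect trying to prove that an arbitrarily chosen permutation cover $\hat P\twoheadrightarrow\hat T$ splits, and this is false even when $\hat T$ is invertible (above it is even permutation); invertibility only says that \emph{some} permutation module contains $\hat T$ as a direct summand, not that every chosen resolution splits. This is why the paper takes a different route: it first shows that a special torus must be flasque, via the isomorphism $H^1(K((t)),T)\simeq H^1(K,T)\oplus H^1(K,N)$ (with $N$ the cocharacter lattice), proved by inflation-restriction over the Laurent series field, so that speciality forces $H^1(K,N)=0$ for every finite separable extension $K$ of $k$; it then quotes the Colliot-Th\'el\`ene--Sansuc result that a flasque torus is special if and only if its character lattice is invertible. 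Some input of this kind from the theory of flasque resolutions, rather than a splitting of your particular sequence, is what the ``only if'' direction genuinely requires.
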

\begin{proof}
If the character lattice of $T_{k_s}$ is invertible, then $T$ is a direct factor of a finite 
product of tori of type $R_{K|k}(\mathbb{G}_{m,K})$, where $K$ is a finite separable extension of $k$ and $\mathbb{G}_{m,K}$ is the multiplicative group over $K$. 
By Hilbert's theorem $90$ and Schapiro's lemma, 
it follows that $T$ is special.

\medskip

Conversely, assume that $T$ is special. Let $K$ be a finite separable field extension of $k$. By Lemma \ref{gille} below, 
$$H^1(K((t)),T)\simeq H^1(K,T)\oplus H^1(K,N)$$
where $N$ is the cocharacter lattice of $T_{k_s}$ and $K((t))$ is the field of formal Laurent series over $K$. Since the torus $T$ is special, we see that 
$H^1(K,N)$ is trivial. As this property holds for every finite separable field extension $K$ of $k$, it means that the torus $T$ is flasque.
By \cite[Proposition $7.4$]{CTS} a flasque torus is special if and only if the character lattice of $T_{k_s}$ is invertible, which completes the proof, modulo the 
following lemma :

\begin{lemme}\label{gille}
 For any torus over the field $k$, there is an isomorphism :
 $$H^1(k((t)),T)\simeq H^1(k,T)\oplus H^1(k,N)$$
 where $N$ is the cocharacter lattice of $T_{k_s}$.
\end{lemme}
{\it Proof of Lemma \ref{gille}}. Set $K=k((t))$. Let $L$ be the union of the field $k'((t))$ for all finite extensions of $k$ inside $k_s$. Then the Galois group 
$\op{Gal}(L|K)$ is equal to $\Gamma$. We have the inflation-restriction exact sequence, see for example \cite[I.2.6(b)]{Coho}:
$$ \begin{tikzpicture}

\node (B) at (3,0) {$1$};
\node (C) at (5.5,0) {$H^1(\Gamma,T(L))$};
\node (D) at (9,0) {$H^1(K,T)$};
\node (E) at (12,0) {$H^1(L,T)$};
\node (F) at (14.3,0) {$1$};


\draw[->,>=latex] (B) to (C);
\draw[->,>=latex] (C) to (D);
\draw[->,>=latex] (D) to (E);
\draw[->,>=latex] (E) to (F);
\end{tikzpicture}$$
The torus $T$ is split over $L$, hence, by Hilbert's theorem $90$, we see that the sets $H^1(\Gamma,T(L))$ and $H^1(K,T)$ are equal. 
By \cite[Lemma $5.17(3)$]{ChGP}, we have 
$$H^1(\Gamma,T(L))\simeq H^1(\Gamma,T(k_s[t,t^{-1}])).$$
Now, we write :
$$T(k_s[t,t^{-1}])=N\otimes_{\mathbb{Z}} k_s[t,t^{-1}]^*=N\otimes_{\mathbb{Z}}(k_s^*\oplus \mathbb{Z})=T(k_s)\oplus N$$
because $T$ splits over $k_s$ and $k_s[t,t^{-1}]^*$ is equal to $k_s^*\oplus \mathbb{Z}$. Finally, we obtain :
$$H^1(K,T)\simeq H^1(\Gamma,T(L))\simeq H^1(\Gamma,T(k_s[t,t^{-1}]))\simeq$$
$$ H^1(\Gamma,T(k_s))\oplus H^1(\Gamma,\oplus N)\simeq H^1(k,T)\oplus H^1(k,\oplus N).$$
 
\end{proof}

\bibliographystyle{plain} 
\bibliography{mat}


\end{document}